\newcommand{\bC}{\mathbb{C}}
\newcommand{\bZ}{\mathbb{Z}}
\newcommand{\bN}{\mathbb{N}}
\newcommand{\cV}{\mathcal{V}}
\newcommand{\cG}{\mathcal{G}}
\newcommand{\cH}{\mathcal{H}}
\newcommand{\cK}{\mathcal{K}}
\newcommand{\Rint}{\mathring{R}}
\newtheorem{theorem}{\noindent {\rm \bf Theorem}}[section]
\newtheorem{lemma}[theorem]{\noindent {\rm \bf Lemma}}
\newenvironment{proof}{\begin{trivlist}\item[]{\sl Proof.\/\ }}
                      {\hfill $\Box$
                      \end{trivlist}}
\journal{arXiv.org}
\begin{document}
\begin{frontmatter}

\title{Exterior diffraction problems for a triangular lattice}

\author[rmi,fre]{D. Kapanadze \corref{cor1}}
\ead{david.kapanadze@gmail.com}

\author[rmi]{E. Pesetskaya }
\ead{kate.pesetskaya@gmail.com}

\cortext[cor1]{Corresponding author}
\address[rmi]{A. Razmadze Mathematical Institute, TSU, Merab Aleksidze II Lane 2, Tbilisi 0193, Georgia}
\address[fre]{Free University of Tbilisi, Tbilisi 0159, Georgia}

\begin{abstract}
Exterior Dirichlet problems for two-dimensional lattice waves on the semi-infinite triangular lattice are considered. Namely, we study Dirichlet problems for the two-dimensional discrete Helmholtz equation in a plane with a hole. New results are obtained for the existence and uniqueness of the solution in the case of the real wave number $k\in (0,2\sqrt{2})$ without passing to a complex wave number. Besides, Green's representation formula for the solution is derived with the help of difference potentials. To demonstrate the results, we propose a method for numerical calculation.
\end{abstract}

\begin{keyword}
discrete Helmholtz equation, exterior Dirichlet problem, metamaterials, triangular lattice model
\end{keyword}

\end{frontmatter}

\section{Introduction}

Nowadays there is an increased industrial and scientific interest in the study of nano- and microstructures of modern materials and composites.
Consideration of discrete structures of the materials is one of the ways to investigate microstructural processes in them, cf., e.g., \cite{Br, CI, Do, Sl}. Therefore, we devote our paper to the study of the exterior Dirichlet problem for the discrete Helmholtz equation.

Continuum models with sufficiently smooth boundaries are well studied (cf. \cite{KC} and references wherein).
However, the derivation of a discrete analogue of the Rayleigh-Sommerfeld scattering theory for different types of lattices is still under development and has many applications. There are five two-dimensional Bravais lattice types which naturally appear in application, and a triangular lattice is one of them. Some structures of left-handed 2D metamaterials \cite{CI} (which are a host microstrip line network periodically loaded with series capacitors and shunt inductors for signal processing and filtering), close-packed planes in some kinds of crystals \cite{BH54, Bu66} can be represented by the triangular lattices. Therefore, in this paper we analyze the exterior Dirichlet problem for the discrete Helmholtz equation in the triangular lattice mathematically formulated in Section \ref{sec:2}. Although a similar problem for a square lattice has been studied in \cite{DK}, its extension to the triangular lattice model is not direct.

In this paper, we obtain new results on the existence and uniqueness of a solution in the case of a real wave number $k\in (0,2\sqrt{2})$. It is well known that for the negative discrete Laplacian in the triangular lattice the spectrum is (absolutely continuous) $[0, 9]$, but there is an exceptional set $\{0, 8, 9\}$ in $[0, 9]$ where the limiting absorption principle fails \cite{AIM16, DK1}. Therefore, for “admissible” wave numbers, one can study the
problem as the limit $k+\iota 0$ of the complex wave number. This method is applied by Sharma in \cite{Sh}, where diffraction on triangular and hexagonal lattices by a finite crack and rigid constraint is investigated. In the present paper, we carry out our investigation without passing to the complex wave number. For this purpose, we use the radiation conditions and asymptotic estimates from \cite{DK1} described in Section \ref{sec:3} and Rellich-Vekua type theorem from Isozaki el al \cite{AIM16}. In Section \ref{sec:4}, we prove the unique solvability results and obtain a representation formula for the solution to the problem under consideration. Notice that our results will help us to analyze the diffraction of lattice waves by various types of defects including straight and zigzag rigid constraints and cracks. To demonstrate purposes, we take a small defect and present some numerical results in Section \ref{sec:5}. For the numerical calculation, we apply the method developed in \cite{BC} which allows us to calculate the lattice Green's functions without the need to perform integrals and appears to be much more effective. Finally, it is worth mentioning that due to the more complex form of the radiation condition for $k\in (2\sqrt{2}, 3)$, cf. \cite{DK1}, we have some difficulties to prove the uniqueness result for this case using the proposed method and, therefore, we consider only the case $k\in (0,2\sqrt{2})$.

\section{Formulation of the problem}
\label{sec:2}

Let us consider $\{\mathcal{V}, \mathcal{E}\}$ a periodic simple graph defining a two-dimensional infinite triangular lattice $\mathfrak{T}$, where
\begin{equation}
\label{eq:Vmain}
\mathcal{V}=\{T(x_1,x_2)\subset \mathbb{R}^2 : (x_1,x_2)\in \bZ^2=\bZ\times \bZ\}
\end{equation}
is a vertex set, $\mathcal{E}$ is an edge set, whose endpoints $(v,w)\in \mathcal{V}\times \mathcal{V}$ are adjacent points, i.e., $| v-w|=1$, and $T$ denotes a 2-dimensional coordinate transformation defined as
\[
T(x_1,x_2) = (x_1+x_2/2,\sqrt{3}x_2/2).
\]

We define for any point $v\in\mathcal{V}$ the 6-neighbourhood $F^0_v$ as the set of points $w\in \mathcal{V}$ such that $| v-w|=1$ and the neighbourhood $F_v$ as $F^0_v\bigcup\{v\}$. Recall that $R\subset \mathcal{V}$ is a region if there exist disjoint nonempty subsets $\Rint$ and $\partial R$ of $R$ such that
\begin{itemize}
\item[(a)] $R=\Rint\cup \partial R$,
\item[(b)] if $v\in \mathring{R}$ then $F_v\subset R$,
\item[(c)] if $v\in\partial R$ then there is at least one point $w\in F^0_v$ such that $w\in \Rint$.
\end{itemize}

As since the subsets $\Rint$ and $\partial R$ are not defined uniquely by $R$, henceforth, it is assumed for a given region $R$ in $\mathcal{V}$
that $\Rint$ and $\partial R$ are given and fixed. We say that $v$ is an interior (boundary) point of $R$ if $v\in \Rint$ ($v\in \partial R$). Further, a region $R\subset\mathcal{V}$ is said to be connected if for any $w,\tilde{w}\in R$ there exists a sequence $v^{(1)},\dots, v^{(n)}\in R$ with $v^{(1)}=w$  and $v^{(n)}=\tilde{w}$ such that for all $0\le i\le n-1$, $| v^{(i)}-v^{(i+1)} | =1$. By definition, a region $R$ with one interior point $v$ is connected and coincides with $F_v$.

\begin{figure}[htbp]
\centering
\includegraphics[scale=0.7]{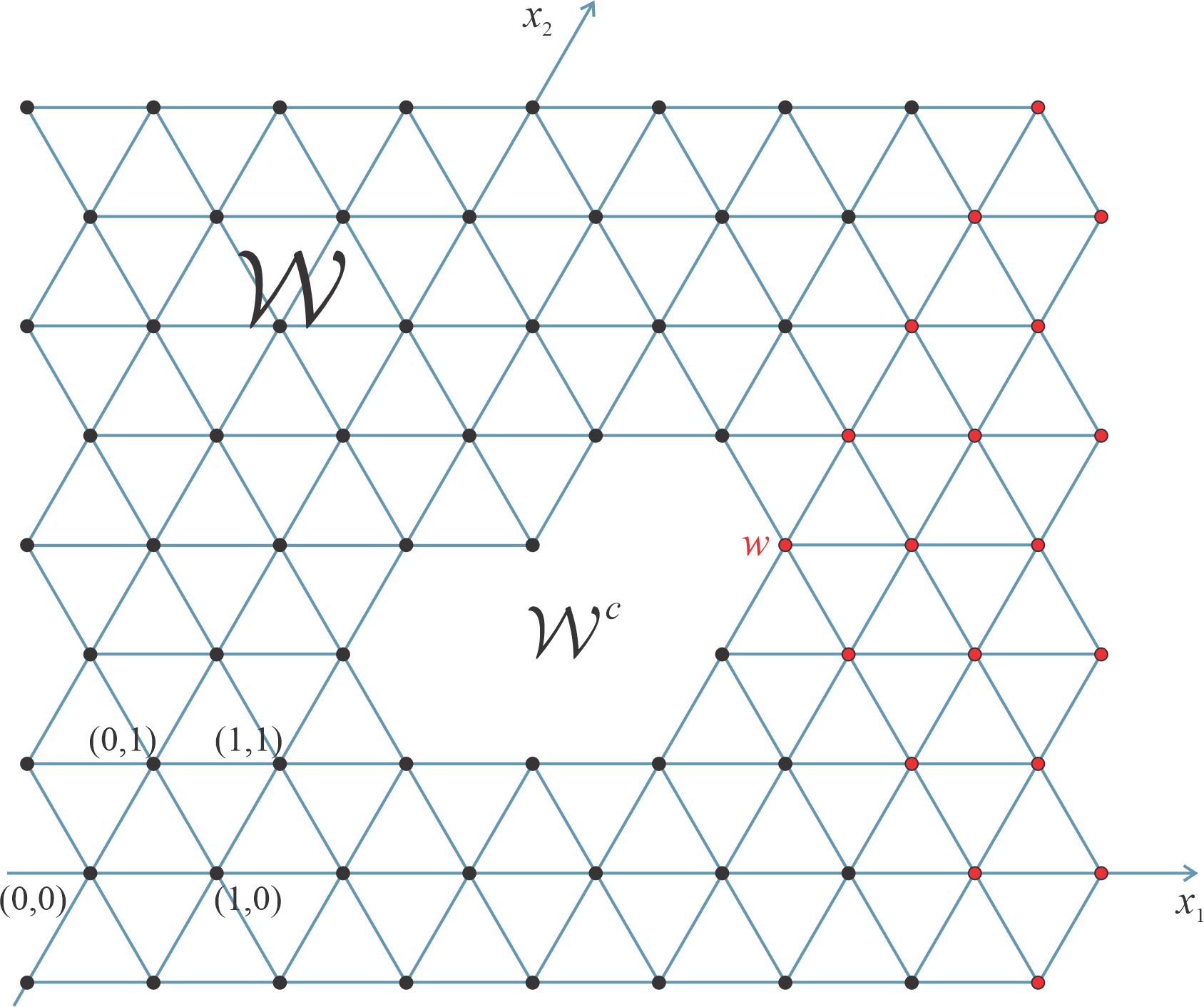}
\caption{Exterior problem in $\mathcal{V}$. Connection between $x=(x_1,x_2)\in \bZ^2$ and the Euclidean coordinates of the vertexes is established via  $(x_1,x_2) \to (x_1+x_2/2,\sqrt{3}x_2/2)$. For the boundary point $w$, the subset of the ``cone'' $C_{0}(w)$ is represented by the red dots.}
\label{Fig1}
\end{figure}

Let $\mathcal{W}$ be a connected region in $\mathcal{V}$ such that its complement $\mathcal{W}^c=\mathcal{V}\backslash \mathcal{W}$ is an empty set or a set with a finite number of lattice points. If $\mathcal{W}^c=\varnothing$ then we have the case $\mathring{\mathcal{W}}\cup\partial\mathcal{W}=\mathcal{V}$. When $\mathcal{W}^c\neq\varnothing$ then we additionally require that
$\mathcal{W}^c$ is a finite region such that $\partial\mathcal{W}^c=\partial\mathcal{W}$. Consequently, we have a disjoint decomposition
\[
\mathring{\mathcal{W}}\cup\partial\mathcal{W}\cup \mathring{\mathcal{W}}^c=\mathcal{V}.
\]

To guarantee the uniqueness of the solution, we suppose that $\mathcal{W}$ satisfies the cone condition. Namely, for any $w=(w_1,w_2)\in\mathcal{W}$, there is a ``cone'' $C_{i}(w)$, $i=0,1,\dots,5,$ such that $C_{i}(w)\subset\mathcal{W}$. Here, $C_{i}(w)$ is defined as follows
\begin{equation*}
\begin{aligned}
C_{i}(w)=\big\{v=(v_1,v_2)\in\mathcal{V}:& | -\sin(\pi i/3)(v_1-w_1)+\cos(\pi i/3)(v_2-w_2) | \\
&\le \sqrt{3} (\cos(\pi i/3) (v_1-w_1) +\sin(\pi i/3)(v_2-w_2))\big\}.
\end{aligned}
\end{equation*}
In particular, when $i=0$, we have
\[
C_{0}(w)=\big\{v=(v_1,v_2)\in\mathcal{V}: | (v_2-w_2) | \le \sqrt{3} (v_1-w_1)\big\}.
\]

The time-harmonic discrete waves in $\mathcal{W}=\mathring{\mathcal{W}} \cup \partial \mathcal{W}$ can be described by solutions of the following discrete Helmholtz equation
\begin{equation}\label{eq:Helmholtz2}
(\Delta_d + k^2)U(w)=0, \quad w=(w_1,w_2)\in \mathcal{W},
\end{equation}
where $\Delta_d$ denotes the discrete (a 7-point) Laplacian
\begin{equation}
\begin{aligned}
\Delta_d U(w)=\sum_{v\in F^0_w}U(v)-6U(w).
\end{aligned}
\end{equation}

We state the problem to find a unique solution $U$ to the discrete Helmholtz equation in $\Omega$ satisfying the non-homogeneous Dirichlet problem:
\begin{subequations}
\begin{align}
(\Delta_d +k^2)U(w) &= 0,\quad\quad\  \ \textup{in}\  \mathring{\mathcal{W}}, \label{eq:H1} \\
U(w)&= f(w),\quad \textup{on}\  \partial \mathcal{W}. \label{eq:H2}
\end{align}
\end{subequations}
Here, $f: \partial \mathcal{W} \rightarrow \bC$ is a given function, and the wave number $k\in (0,2\sqrt{2})$ is real, cf. Figure \ref{Fig1}.

For convenience, we prefer to work in a simplified coordinate system $\bZ^2$ used in \eqref{eq:Vmain} and, therefore,
for describing the content in terms of integer coordinates we tacitly use all the definitions introduced above. Notice that
our problem can be written as follows:
\begin{subequations}
\begin{align}
(\Delta_d +k^2)u(x) &= 0,\quad\quad\  \ \textup{in}\  \mathring{\Omega}, \label{eq:Helmholtz} \\
u(x)&= f(x),\quad \textup{on}\  \partial\Omega. \label{eq:H3}
\end{align}
\end{subequations}
Here, $u(x)=U(w)$ with $x=(x_1,x_2)\in \bZ^2$, and $w = T(x_1,x_2) \in \mathcal{W}$. The discrete Laplacian is given by the following expression
\begin{equation}
\begin{aligned}
\Delta_d u(x)=&u(x+e_1)+u(x-e_1)+u(x+e_2)+u(x-e_2)\\
&+u(x+e_1-e_2)+u(x-e_1+e_2)-6u(x),
\end{aligned}
\end{equation}
where $e_1=(1,0)$, $e_2=(0,1)$ stand for the standard base of $\bZ^2$, cf. Figure \ref{Fig2}.

\begin{figure}[htbp]
\centering
\includegraphics[scale=0.7]{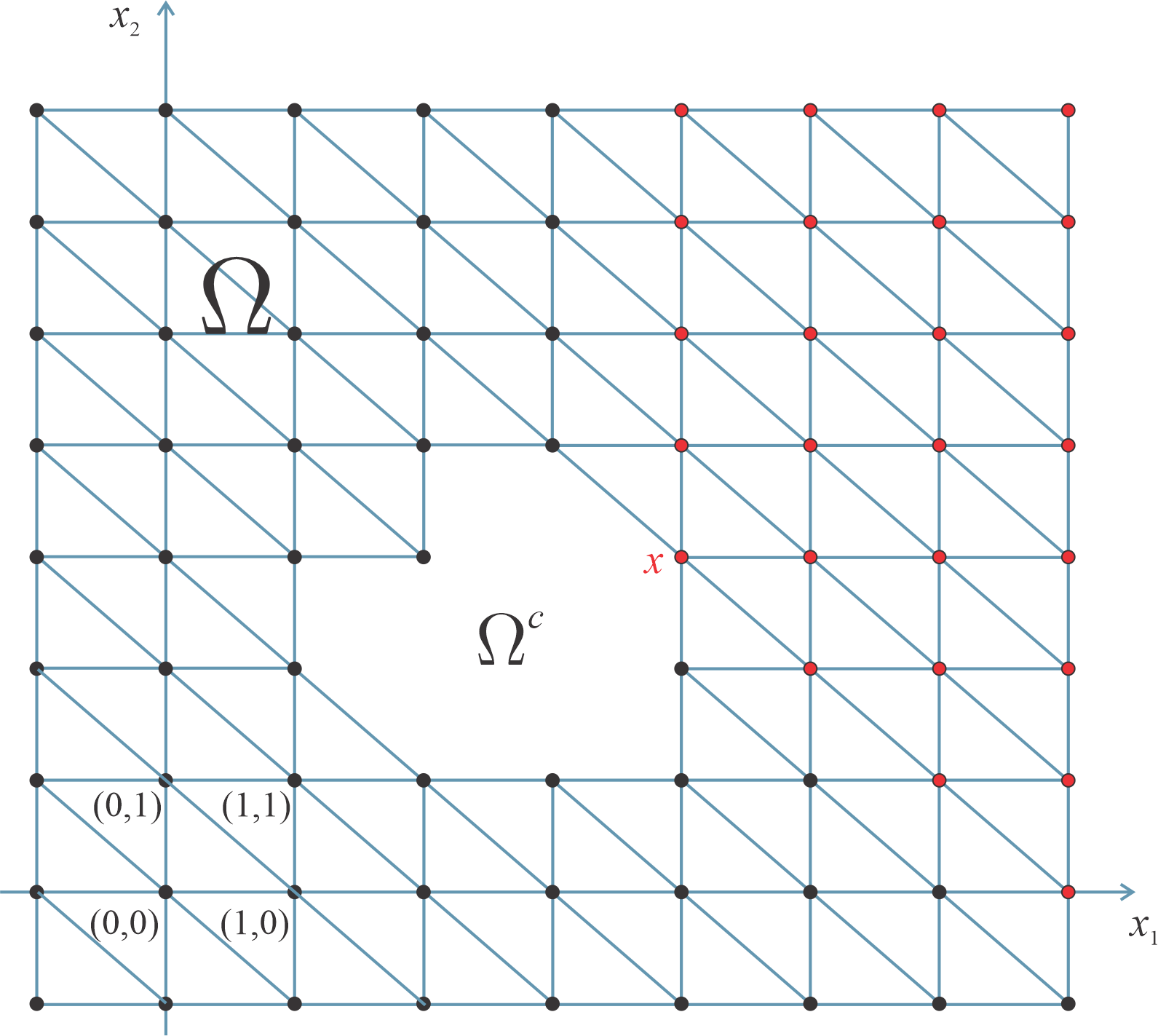}
\caption{Exterior problem in $\bZ^2$. The subset of the ``cone'' $C_{0}(w)$ for the boundary point $w=T(x)$ is represented by the red dots.}
\label{Fig2}
\end{figure}

Thus, we are interested in studying the problem of the existence and uniqueness of the function $u:\Omega\to \bC$ such that $u(x)$ satisfies the discrete Helmoltz equation \eqref{eq:Helmholtz} with $k\in (0,2\sqrt{2})$ and the boundary condition \eqref{eq:H3}. From now on we will refer to this problem as Problem $\mathcal{P}_{\mathrm{ext}}$.

\section{Green's representation formula}\label{sec:3}

Denote by $\cG(x;y)$ the Green's function for the discrete Helmholtz equation \eqref{eq:Helmholtz} centered at $y$ and evaluated at $x$. Then, the function $\cG(x;y)$ satisfies the equation
\begin{equation}\label{eq:greens}
(\Delta_d+k^2)\cG(x;y)=\delta_{x,y},
\end{equation}
where $\delta_{x,y}$ is the Kronecker delta. For brevity, we use the notation $\cG(x)$ for $\cG(x;0)$. Notice that
$\cG(x;y)=\cG(x-y)$.

Using the discrete Fourier transform and the inverse Fourier transform
we get
\begin{equation}\label{eq:gmn}
\cG(x)=\frac{1}{4\pi^2}\int_{-\pi}^{\pi}\int_{-\pi}^{\pi}\frac{e^{\iota(x\cdot \xi)}}{\sigma(\xi;k)}d\xi, \quad \xi=(\xi_1,\xi_2),
\end{equation}
where
\begin{equation}
\begin{aligned}
\sigma(\xi;k^2)&=e^{\iota\xi_1}+e^{-\iota\xi_1}+e^{\iota\xi_2}+e^{-\iota\xi_2}+e^{\iota\xi_1}e^{-\iota\xi_2}+e^{-\iota\xi_1}e^{\iota\xi_2}-6+k^2\\
&=k^2-6+2\cos\xi_1+2\cos\xi_2+2\cos(\xi_1-\xi_2).
\end{aligned}
\end{equation}

The lattice Green's function $\mathcal{G}$ is quite well known when $k^2\in \bC\backslash[0, 9]$ (cf., e.g., \cite{Ho}).
Notice that if $k^2\in \bC\backslash[0, 9]$ then $\sigma\neq 0$ and, consequently, $\mathcal{G}$ in \eqref{eq:gmn} is well defined. In this case
$\mathcal{G}(x)$ decays exponentially when $| x |\to\infty$.

For $k\in (0, 2\sqrt{2})$ we define the lattice Green's function as a pointwise limit of
\begin{equation}
(R_{\lambda+\iota\varepsilon}\delta_{x,0})(x):=\frac{1}{4\pi^2}\int_{-\pi}^{\pi}\int_{-\pi}^{\pi}\frac{e^{\iota x\cdot\xi }d\xi_1 d\xi_2}{\sigma(\xi;k^2+\iota\varepsilon)}
\end{equation}
as $k^2+\iota\varepsilon \to k^2+\iota 0$ and denote it again by $\mathcal{G}(x)$, i.e., $\mathcal{G}(x)=(R_{\lambda+\iota 0}\delta_{x,0})(x)$, cf. \cite{DK1}. Notice that $\mathcal{G}(x)$ is a solution to equation \eqref{eq:greens} and satisfies equalities
\begin{equation}\label{eq:lateq}
\mathcal{G}(x_1,x_2)=\mathcal{G}(x_2,x_1)=\mathcal{G}(-x_1,-x_2)=\mathcal{G}(x_1+x_2,-x_2)
\end{equation}
for all $x=(x_1,x_2) \in \bZ^2$.

In order to simplify further arguments, let us introduce the following vectors:
\begin{equation*}
\begin{aligned}
e_3=e_1-e_2,\quad e_4=-e_1,\quad e_5=-e_2,\quad e_6=-e_3.
\end{aligned}
\end{equation*}
Consider a region $R$ in $\bZ^2$. Denote by $(\partial R)_j$, $j=1,...,6$, a set of all boundary points $y\in\partial R$ such that $y-e_j\in \Rint$ and call it the sides of the boundary $R$. Clearly, $\partial R$ is the union of its six sides: $\partial R=\cup_{j=1}^6 (\partial R)_j$. Notice that a boundary point $y$ can simultaneously belong to all six sides of $R$. However, in our arguments presented below it will be always clear which side is needed to be considered. Under this condition, we define the discrete derivative in the outward normal direction $e_j$, $j=1,\dots,6$,
\begin{equation}
\mathcal{T}u(y)=u(y)-u(y-e_j), \quad y\in(\partial R)_j.
\end{equation}

Let us introduce the following set $H_0=\{(0,0)\}$ and then define $H_{N}$, $N\in\mathbb{N}$, with the help of recurrence formula
\begin{equation}
H_{N}:=\bigcup_{x\in H_{N-1}}F_{x}
\end{equation}
with $\mathring{H}_N:=H_{N-1}$, and $(\partial H)_{N}:=H_{N}\backslash \mathring{H}_N$.

Recall a Green's representation formula for a finite region, cf. \cite{DK1}.

\begin{theorem}\label{theo:repr} Let $R$ be a finite region. Then, for a given function $u:R\to\bC$ and any point $x\in \Rint$, we have a discrete Green's representation formula
\[
u(x)=\sum_{y\in\partial R}\big(u(y)\mathcal{T}\cG(x-y)-\cG(x-y)\mathcal{T}u(y)\big)+\sum_{y\in \Rint}\cG(x-y)(\Delta_d+k^2) u(y).
\]
In particular, if $u$ is a solution to the discrete Helmholtz equation
\[
(\Delta_d+k^2)u(x)=0\quad \textup{in}\  \Rint,
\]
then
\begin{equation}\label{eq:repr}
u(x)=\sum_{y\in\partial R}\big(u(y)\mathcal{T}\cG(x-y)-\cG(x-y)\mathcal{T}u(y)\big).
\end{equation}
\end{theorem}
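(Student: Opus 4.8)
The plan is to establish the representation formula by a discrete analogue of Green's second identity, exploiting the symmetry of the Green's function and the defining equation $(\Delta_d + k^2)\cG(x-y) = \delta_{x,y}$. The starting point is the discrete Green's identity: for any two functions $u, \phi : R \to \bC$ on a finite region,
\[
\sum_{y \in \Rint} \big( \phi(y)(\Delta_d u)(y) - u(y)(\Delta_d \phi)(y) \big) = \sum_{y \in \partial R} \big( u(y)\,\mathcal{T}\phi(y) - \phi(y)\,\mathcal{T}u(y) \big),
\]
where $\mathcal{T}$ is the outward normal difference defined above. I would first prove this identity, then substitute $\phi(y) = \cG(x-y)$ for fixed $x \in \Rint$; the symmetries \eqref{eq:lateq} guarantee that the Laplacian acting on $y \mapsto \cG(x-y)$ reproduces $\delta_{x,y}$ (up to the $k^2$ term), so the left-hand side collapses via the sifting property to $u(x) + k^2\text{-cancellations}$.

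First I would carry out the summation-by-parts computation. Writing $\Delta_d u(y) = \sum_{j=1}^{6} u(y+e_j) - 6u(y)$ with the six neighbour vectors $e_1,\dots,e_6$ (noting $e_{j+3} = -e_j$), the bilinear expression $\phi(y)\Delta_d u(y) - u(y)\Delta_d \phi(y)$ becomes $\sum_{j=1}^{6}\big(\phi(y)u(y+e_j) - u(y)\phi(y+e_j)\big)$. Summing over $y \in \Rint$ and reindexing each term, the bulk contributions telescope: an edge $\{y, y+e_j\}$ with both endpoints in $\Rint$ contributes with opposite signs from the two orientations and cancels. What survives is precisely the edges crossing $\partial R$, and collecting these by the side decomposition $\partial R = \cup_{j=1}^{6}(\partial R)_j$ — where $y \in (\partial R)_j$ means $y - e_j \in \Rint$ — yields exactly the boundary sum with $\mathcal{T}\phi(y) = \phi(y) - \phi(y-e_j)$. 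This is the step where the side decomposition and the convention (fixed but possibly non-unique assignment of a boundary point to a side) must be used carefully; the remark in the text that "it will always be clear which side is needed" is doing real work here, and I expect this bookkeeping — matching each boundary-crossing edge to exactly one side — to be the main obstacle, though it is more a matter of care than of depth.

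Next I would specialize $\phi(y) = \cG(x - y)$. By \eqref{eq:greens} and the reflection symmetry $\cG(z) = \cG(-z)$ from \eqref{eq:lateq}, one has $(\Delta_d + k^2)\cG(x - \cdot)(y) = \delta_{x,y}$ as a function of $y$. Hence $\Delta_d \phi(y) = \delta_{x,y} - k^2\cG(x-y)$, and the left-hand side of Green's identity becomes
\[
\sum_{y \in \Rint} \cG(x-y)(\Delta_d u)(y) - \sum_{y \in \Rint} u(y)\big(\delta_{x,y} - k^2\cG(x-y)\big) = \sum_{y \in \Rint}\cG(x-y)(\Delta_d + k^2)u(y) - u(x),
\]
using that $x \in \Rint$ so the Kronecker delta picks out $u(x)$. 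Rearranging gives the claimed formula
\[
u(x) = \sum_{y \in \partial R}\big(u(y)\mathcal{T}\cG(x-y) - \cG(x-y)\mathcal{T}u(y)\big) + \sum_{y \in \Rint}\cG(x-y)(\Delta_d + k^2)u(y),
\]
and the second assertion follows by setting $(\Delta_d + k^2)u \equiv 0$ on $\Rint$. One subtlety to flag: since $\cG$ for real $k$ is defined as a limiting-absorption pointwise limit, I would either argue that the identity holds for each $\cG_\varepsilon$ with $k^2 + \iota\varepsilon$ (where everything converges absolutely and the argument is clean) and then pass to the limit $\varepsilon \to 0^+$ using that $R$ is finite, or simply note that $\cG$ solves \eqref{eq:greens} and satisfies \eqref{eq:lateq} pointwise, which is all the algebra above requires — the finiteness of $R$ makes every sum finite, so no convergence issue arises at all.
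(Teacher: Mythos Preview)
Your approach is the standard one and is essentially what the paper relies on: the paper does not actually prove this theorem in the text but recalls it from \cite{DK1}, immediately afterwards stating the discrete Green's second identity \eqref{eq:GRsecond}, which is exactly the tool you propose to use. Substituting $\phi(y)=\cG(x-y)$ into \eqref{eq:GRsecond} and invoking \eqref{eq:greens} together with the symmetry $\cG(z)=\cG(-z)$ from \eqref{eq:lateq} is precisely the intended argument, and your remarks about the side decomposition bookkeeping and the finiteness of $R$ obviating any convergence issues are well placed.

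One small slip to fix: the version of the second identity you wrote has the left-hand side as $\sum_{\Rint}(\phi\,\Delta_d u - u\,\Delta_d\phi)$, but the correct sign (matching \eqref{eq:GRsecond} and the continuous analogue) is
\[
\sum_{y\in\Rint}\big(u(y)\,\Delta_d\phi(y)-\phi(y)\,\Delta_d u(y)\big)=\sum_{y\in\partial R}\big(u(y)\,\mathcal{T}\phi(y)-\phi(y)\,\mathcal{T}u(y)\big).
\]
With this sign the substitution gives $u(x)-\sum_{\Rint}\cG(x-y)(\Delta_d+k^2)u(y)$ on the left, and the rearrangement then yields the stated formula without the extra minus sign your displayed computation would otherwise produce. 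This is purely bookkeeping; the strategy is correct and coincides with the paper's (cited) proof.
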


Finally, for a finite region $R$, recall a discrete analogue of Green's second identity
\begin{equation}\label{eq:GRsecond}
\sum_{x\in \Rint}(u(x)\Delta_d v(x)-v(x)\Delta_d u(x))=\sum_{y\in \partial R} (u(y)\mathcal{T}v(y)-v(y)\mathcal{T}u(y)).
\end{equation}

Now let us give a definition of a radiating solution on $\Omega$ when $k^2\in (0,8)$.
We say that $u:\Omega \to\bC$ satisfies the radiation condition at infinity if
\begin{equation}
\label{eq:radcond}
\left\{\begin{aligned} u(x)&=O(| x|^{-\frac12}),\\
u(x+e_j)&=e^{\iota \xi^*_j(\alpha,k)} u(x)+O(| x|^{-\frac32}),\quad j=1,2,
\end{aligned}
\right.
\end{equation}
with the remaining term decaying uniformly in  all directions $x/| x|$, where $x$ is characterized as $x_1=| x|\cos\alpha$,  $x_2=| x|\sin\alpha$, $0\le \alpha < 2\pi$. Here, $\xi^*_j(\alpha,k)$ is the $j$th coordinate of the point $\xi^*(\alpha, k)$.
Recall that the point $\xi^*(\alpha, k)=\xi^*=(\xi^*_1, \xi^*_2)$ is a unique solution to the following system of equations
\begin{eqnarray*}
2\zeta(\sin\xi_1+\sin(\xi_1-\xi_2))&=&\cos\alpha,\\
2\zeta(\sin\xi_2-\sin(\xi_1-\xi_2))&=&\sin\alpha,\\
k^2-6+2\cos\xi_1+2\cos\xi_2+2\cos(\xi_1-\xi_2)&=&0,
\end{eqnarray*}
where $\zeta$ is a positive constant, cf. \cite{DK1}.

\begin{lemma}
\label{lemma:zeta}
Let $k^2\in (0,8)$, and the function $u$ satisfies the radiation condition at infinity \eqref{eq:radcond}. Then, for any boundary point $y\in(\partial H_N)$, we have
\begin{equation}
\mathcal{T}u(y)=\zeta(y,k) u(y)+O(| y|^{-\frac32}) \quad \textup{as}\ N\to \infty
\end{equation}
such that $\mathrm{\Im m}\,\zeta(y,k)>0$.
\end{lemma}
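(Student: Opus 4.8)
The plan is to use the second line of the radiation condition \eqref{eq:radcond} to rewrite the discrete normal derivative $\mathcal{T}u(y)=u(y)-u(y-e_j)$ as a pointwise multiple of $u(y)$, and then to determine the sign of the imaginary part of that multiple.

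First I would record that, as a consequence of \eqref{eq:radcond}, for \emph{every} $j=1,\dots,6$ one has
\begin{equation*}
u(x+e_j)=e^{\iota\, e_j\cdot\xi^*(\alpha,k)}\,u(x)+O(|x|^{-3/2}),
\end{equation*}
uniformly in the direction $\alpha=\alpha_x$ of $x$: for $j=1,2$ this is \eqref{eq:radcond} itself, and for $e_3,\dots,e_6$ (each of the form $\pm e_1$, $\pm e_2$ or $\pm(e_1-e_2)$) it follows by composing at most two of the given relations; the angle of the intermediate point differs from $\alpha$ by $O(|x|^{-1})$, which costs only $O(|x|^{-3/2})$ because $u=O(|x|^{-1/2})$ and $\xi^*(\,\cdot\,,k)$ depends smoothly on the angle (the dispersion curve $\{\sigma(\,\cdot\,;k^2)=0\}$ being strictly convex for $k^2\in(0,8)$, cf.\ \cite{DK1}). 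Taking $x=y-e_j$ for a boundary point $y\in(\partial H)_N$ with $y-e_j\in\mathring{H}_N$, and inverting, gives
\begin{equation*}
\mathcal{T}u(y)=\bigl(1-e^{-\iota\, e_j\cdot\xi^*(\alpha_y,k)}\bigr)u(y)+O(|y|^{-3/2}),
\end{equation*}
so that $\zeta(y,k)=1-e^{-\iota\, e_j\cdot\xi^*(\alpha_y,k)}$ and $\mathrm{\Im m}\,\zeta(y,k)=\sin\!\bigl(e_j\cdot\xi^*(\alpha_y,k)\bigr)$.

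It then remains to prove $\sin\!\bigl(e_j\cdot\xi^*(\alpha_y,k)\bigr)>0$. Since the symmetry group of $\mathfrak{T}$ permutes $e_1,\dots,e_6$, preserves $\sigma$ (cf.\ \eqref{eq:lateq}) and leaves $e_j\cdot\xi^*$ unchanged once the pair $(y,j)$ is transported correspondingly, I would reduce to $j=1$ and prove $\xi^*_1(\alpha,k)\in(0,\pi)$ for every $\alpha$ that arises as $\alpha_y$ with $y\in(\partial H)_N$, $y-e_1\in\mathring{H}_N$. Reading off $(\partial H)_N$ from the hexagonal graph distance ($d(a,b)=|a|+|b|$ if $ab\ge 0$, $d(a,b)=\max(|a|,|b|)$ if $ab<0$) shows that these angles fill the connected arc $[-\pi/4,\pi/2]$, which contains $\alpha=0$. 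As $\alpha\mapsto\xi^*_1(\alpha,k)$ is continuous, it suffices to check: (i) at $\alpha=0$ the defining system forces $\xi^*=(2\xi^*_2,\xi^*_2)$ with $\cos\xi^*_2=\frac{1}{2}\bigl(-1+\sqrt{9-k^2}\bigr)\in(0,1)$ and, since $\zeta>0$, $\xi^*_2\in(0,\pi/2)$, so $\xi^*_1\in(0,\pi)$; (ii) $\xi_1=\pm\pi$ never occurs on the curve for $k^2\in(0,8)$, since $\sigma(\pi,\xi_2;k^2)=k^2-8$ for all $\xi_2$ (and likewise at $-\pi$); (iii) $\xi^*_1=0$ forces $\xi^*=(0,\pm\beta)$ with $\cos\beta=1-k^2/4$, where $\nabla_\xi\sigma(\xi^*)\parallel(1,-2)$, and comparing with $\nabla_\xi\sigma(\xi^*)=-\zeta^{-1}(\cos\alpha,\sin\alpha)$, $\zeta>0$, leaves only $\alpha\in\{\arctan(-2),\,\pi-\arctan 2\}$, both outside $[-\pi/4,\pi/2]$. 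Hence the continuous function $\xi^*_1(\,\cdot\,,k)$ cannot leave $(0,\pi)$ on that arc, and therefore $\mathrm{\Im m}\,\zeta(y,k)>0$ (and, by compactness, is bounded below by a positive constant depending only on $k$).

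The main obstacle is this positivity step: excluding that the phase increment $e_j\cdot\xi^*(\alpha,k)$ reaches $0$ or $\pi$ for some admissible direction and some $k^2\in(0,8)$. It is genuinely borderline, because as $k^2\uparrow 8$ the point $\xi^*$ runs into the saddle $(\pi,\pi)$ of $\sigma$ and the increment does tend to $\pi$; the content of the argument is that equality occurs only at the excluded endpoint $k^2=8$. The continuity and strict-convexity properties of the dispersion curve on $k^2\in(0,8)$ used above are precisely those imported from \cite{DK1}.
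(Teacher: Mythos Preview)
Your argument is correct and follows the same overall strategy as the paper: use the second line of \eqref{eq:radcond} (composed once for the diagonal direction $e_3$) to write $\mathcal{T}u(y)=(1-e^{-\iota\, e_j\cdot\xi^*})u(y)+O(|y|^{-3/2})$, and then check that the relevant phase increment lies in $(0,\pi)$ on each side of $\partial H_N$. The paper carries this out side by side for $j=1,2,3$ and simply asserts, e.g., that $\alpha\in(-\pi/4,\pi/2)$ implies $\sin\xi^*_1>0$, deferring the geometry of the dispersion curve to \cite{DK1}. You instead reduce to the single case $j=1$ via the lattice symmetries recorded in \eqref{eq:lateq} and then give a self-contained topological argument---checking the value at $\alpha=0$, excluding $\xi^*_1=\pm\pi$ because $\sigma(\pm\pi,\xi_2;k^2)=k^2-8\neq 0$, and excluding $\xi^*_1=0$ because the outward normal to the curve there points along $(1,-2)$, outside the admissible arc. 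This buys you an explicit proof of the positivity step that the paper leaves to a citation, and it makes transparent why the endpoint $k^2=8$ is excluded; the cost is only a little extra bookkeeping with the gradient of $\sigma$.
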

\begin{proof} First, let us consider the case $y\in(\partial H_N)_1$. Then from \eqref{eq:radcond}, we get
\[
\mathcal{T}(y)=u(y)-u(y-e_1)=\zeta(y,k) u(y) + O(| y|^{-\frac32}),
\]
where $\zeta(y,k)=1-e^{-\iota \xi^*_1(\alpha,k)}$, and $\alpha \in (-\pi/4, \pi/2)$ for $y\in(\partial H_N)_1$. Thus, $\sin(\xi^*_1)>0$ and, consequently, $\mathrm{\Im m}\,\zeta(y,k)>0$.

For the case $y\in(\partial H_N)_2$ the discrete derivative in the outward normal
direction takes the form $\mathcal{T}u(y)= u(y)-u(y-e_2)$.  From \eqref{eq:radcond} we easily derive
\[
\mathcal{T}(y)=\zeta(y,k) u(y) + O(| y|^{-\frac32}),
\]
where $\zeta(y,k)=1-e^{-\iota \xi^*_2(\alpha,k)}$. In this case $\alpha \in (0, 3\pi/4)$ which implies that $\mathrm{\Im m}\,\zeta(y,k)>0$.

For $y\in(\partial H_N)_3$ we need to consider $\mathcal{T}u(y)=u(y)-u(y-e_3)$. From \eqref{eq:radcond} we have
\[
u(y'+e_1-e_2)=e^{\iota \xi^*_1}u(y'-e_2)+O(| y-e_2|^{-\frac32}),
\]
and
\[
u(y'-e_2)=e^{-\iota \xi^*_2}u(y')+O(| y'|^{-\frac32}),
\]
where $y'=y-e_1+e_2$. Further, we get
\[
u(y'+e_1-e_2)=e^{\iota (\xi^*_1-\xi^*_2)}u(y')+O(| y'|^{-\frac32})
\]
which can be written as
\[
u(y)=e^{\iota (\xi^*_1-\xi^*_2)}u(y-e_3)+O(| y'|^{-\frac32}).
\]
Consequently, we derive
\[
\mathcal{T}(y)=\zeta(y,k) u(y) + O(| y|^{-\frac32}),
\]
where $\zeta(y,k)=1-e^{-\iota (\xi^*_1-\xi^*_2)}$. For $y\in(\partial H_N)_3$ we have $\alpha \in (-\pi/2, 0)$ and, in this case, $0<\xi^*_1-\xi^*_2<\pi$ which again gives us $\mathrm{\Im m}\,\zeta(y,k)>0$.

Similar arguments applied to the remaining sides of the boundary $\partial H_N$ complete the proof.
\end{proof}

For a fixed point $x\in\bZ^2$ and any point $y\in \partial H_{N}$ the radiation conditions \eqref{eq:radcond} implies
\[
\sum_{y\in\partial H_{N}}(u(y)\mathcal{T}\cG(x-y)-\cG(x-y)\mathcal{T}u(y)) \to 0, \quad N\to \infty.
\]
Indeed, for instance, since $\alpha(y-x)$ tends to $\alpha=\alpha(y)$, and, consequently, $\zeta(y-x,k)$ tends to $\zeta(y,k)$ as $| y |\to \infty$ then, for sufficiently large $N$, we have
\[
\begin{aligned}
u(y)\mathcal{T}\cG(x-y)&-\cG(x-y)\mathcal{T}u(y)\\
&=u(y)\cdot O(N^{-\frac32})+\cG(x-y)\cdot O(N^{-\frac32})=O(N^{-2}).
\end{aligned}
\]
Further, Theorem \ref{theo:repr} applied for $\Omega\cap H_{N}$, where $N\in\bN$ is sufficiently large, and then passing to the limit $N\to \infty$, yields the following Green's formula for a radiating solution $u$ to the discrete Helmholtz equation \eqref{eq:Helmholtz}
\begin{equation}\label{eq:repre}
u(x)=\sum_{y\in\partial \Omega}(u(y)\mathcal{T}\cG(x-y)-\cG(x-y)\mathcal{T}u(y)).
\end{equation}

From \eqref{eq:repre},  using results obtained in \cite{DK1}, we can conclude that every radiating solution $u$ to the discrete Helmholtz equation \eqref{eq:H1} has the following asymptotic expansion
\begin{equation}\label{eq:far}
u(x)=-\frac{e^{\iota \mu(\alpha,k)|x|}}{|x|^\frac12}\left\{u_{\infty}(\hat{x})+O\left(\frac{1}{|x|}\right)\right\}, \quad |x|\to \infty,
\end{equation}
where $\mu(\alpha,k):=\xi^*(\alpha,k)\cdot \hat{x}$, $\hat{x}:=x/|x|$, and $\xi^*(\alpha,k)=(\xi^*_1(\alpha,k),\xi^*_2(\alpha,k))$. Here, the function $u_{\infty}(\hat{x})$, known as the far field pattern of $u$, can be expressed  with the help of formula (12) from \cite{DK1}.

Now we can formulate the following statement.
\begin{theorem}\label{th:uni} The Problem $\mathcal{P}_{\mathrm{ext}}$ has at most one radiating solution.
\end{theorem}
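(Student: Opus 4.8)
The plan is the classical route for exterior problems: by linearity it suffices to show that a radiating solution $u:\Omega\to\bC$ of the homogeneous problem $(\Delta_d+k^2)u=0$ in $\mathring{\Omega}$, $u=0$ on $\partial\Omega$, must vanish identically; applying this to the difference $u_1-u_2$ of two solutions then gives the claim. The first step is to exploit that $k$ is real. I would apply the discrete Green's second identity \eqref{eq:GRsecond} on the finite region $\Omega\cap H_N$, choosing $N$ large enough that $\partial\Omega\subset\mathring{H}_N$, with $v=\overline{u}$. Since $\Delta_d u=-k^2u$ and $\Delta_d\overline{u}=-k^2\overline{u}$, the left-hand side of \eqref{eq:GRsecond} vanishes, so $\sum_{y}\bigl(u(y)\mathcal{T}\overline{u}(y)-\overline{u}(y)\mathcal{T}u(y)\bigr)=0$ over the boundary of $\Omega\cap H_N$. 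That boundary consists of the points of $\partial\Omega$, where $u=0$ and the contribution is zero, together with the points of $\partial H_N$; hence $\sum_{y\in\partial H_N}\bigl(u(y)\mathcal{T}\overline{u}(y)-\overline{u}(y)\mathcal{T}u(y)\bigr)\to 0$ as $N\to\infty$.

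Next I would bring in the radiation condition. By Lemma \ref{lemma:zeta}, for $y\in\partial H_N$ we have $\mathcal{T}u(y)=\zeta(y,k)u(y)+O(|y|^{-3/2})$, while $u(y)=O(|y|^{-1/2})$ by \eqref{eq:radcond}; therefore $u(y)\mathcal{T}\overline{u}(y)-\overline{u}(y)\mathcal{T}u(y)=-2\iota\,\mathrm{\Im m}\,\zeta(y,k)\,|u(y)|^2+O(|y|^{-2})$. Since $\#(\partial H_N)=O(N)$, the remainders sum to $O(N^{-1})$, so $\sum_{y\in\partial H_N}\mathrm{\Im m}\,\zeta(y,k)\,|u(y)|^2\to 0$, every summand being nonnegative by Lemma \ref{lemma:zeta}. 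Combining this with the far field expansion \eqref{eq:far}, i.e. $|u(y)|^2=|u_\infty(\hat{y})|^2|y|^{-1}+O(|y|^{-2})$, the left-hand side is a Riemann-type sum over the directions $\alpha$ that converges to a positive-weight angular integral of $\mathrm{\Im m}\,\zeta(\alpha,k)\,|u_\infty(\alpha)|^2$; the strict positivity of $\mathrm{\Im m}\,\zeta$ then forces $u_\infty\equiv 0$. In particular \eqref{eq:far} gives $u(x)=O(|x|^{-3/2})$, so $u\in\ell^2(\bZ^2)$.

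Finally I would invoke the Rellich--Vekua type theorem of \cite{AIM16}: a solution of the discrete Helmholtz equation on the exterior with vanishing far field pattern (equivalently, lying in $\ell^2$) vanishes outside a bounded set. It then remains to propagate the vanishing inward. Solving $(\Delta_d+k^2)u=0$ for any one stencil value in terms of the other six allows one to extend the set on which $u=0$ one lattice step at a time, and the cone condition imposed on $\mathcal{W}$ ensures that through every point of $\mathring{\Omega}$ there passes a cone $C_i(w)\subset\Omega$ on which $u$ already vanishes far out; marching back along such a cone toward its apex yields $u\equiv 0$ on $\mathring{\Omega}$, and then the homogeneous boundary condition gives $u\equiv 0$ on all of $\Omega$.

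I expect the main obstacle to be twofold. First, making rigorous the passage from $\sum_{y\in\partial H_N}\mathrm{\Im m}\,\zeta(y,k)\,|u(y)|^2\to 0$ to $u_\infty\equiv 0$: one must control the geometry of the expanding sides $(\partial H_N)_j$, verify that the remainders in \eqref{eq:radcond} are uniform in the direction $x/|x|$, and handle (via the far field expansion rather than a uniform lower bound) any directions where $\mathrm{\Im m}\,\zeta$ might come close to $0$. Second, the discrete unique continuation step, where the precise opening angle of the cones $C_i(w)$ must be matched to the geometry of the $7$-point stencil so that vanishing of $u$ genuinely propagates all the way to each apex; this is exactly the place where the cone condition on $\mathcal{W}$ is indispensable.
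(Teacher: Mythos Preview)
Your proposal is correct and follows essentially the same route as the paper: reduce to the homogeneous problem, apply a Green identity on $\Omega\cap H_N$ with $\bar u$, use Lemma~\ref{lemma:zeta} on $\partial H_N$ to see that the weighted sum $\sum_{y\in\partial H_N}\mathrm{\Im m}\,\zeta(y,k)\,|u(y)|^2$ tends to $0$, and then invoke the Rellich--Vekua theorem of \cite{AIM16} together with the unique continuation furnished by the cone condition. The only cosmetic differences are that the paper uses Green's \emph{first} identity (and takes the imaginary part of a real left-hand side) rather than the second, and passes directly from the vanishing boundary sum to Rellich without your intermediate far-field step $u_\infty\equiv 0$.
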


\begin{proof}
It is sufficient to show that corresponding homogeneous problem has only the trivial solution.

A discrete analogue of the Green's first identity applied in $\Omega \cap H_N$ has a form (see \cite{DK1})
\begin{equation}\label{eq:GRfirst}
\sum_{x\in \Omega \cap H_N}(\nabla^+_d u(x)\cdot \nabla^+_d v(x)+ \nabla^-_d u(x)\cdot \nabla^-_d v(x)+u(x)\Delta_d v(x))=
\sum_{y\in \partial H_N } u(y)\mathcal{T}v(y).
\end{equation}
Taking $v:=\overline{u}$, we get
\begin{equation}\label{eq:SN}
\sum_{x\in \Omega\cap H_N }\left(|\nabla^+_d u(x)|^2+ |\nabla^-_d u(x)|^2-k^2| u(x)|^2\right)=\sum_{y\in \partial H_N } \overline{u}(y)\mathcal{T}u(y).
\end{equation}

Using Lemma \ref{lemma:zeta}, we rewrite equality \eqref{eq:SN} as follows
\[
\sum_{x\in \Omega\cap H_N}\left(|\nabla^+_d u(x)|^2+ |\nabla^-_d u(x)|^2-k^2| u(x)|^2\right)=\sum_{y\in \partial H_N} \zeta(y)| u(y)|^2+O(N^{-1}).
\]
Taking the imaginary part of the last identity and passing to the limit as $N\to \infty$, we get
\[
\sum_{y\in \partial H_N} | u(y)|^2 \to 0\quad \textup{as}\ N\to \infty.
\]
Further, due to Rellich type theorem \cite{AIM16, IM, DK}, we get $u\equiv 0$ outside of $H_N$ for sufficiently large $N$. Since $\Omega$ satisfies the cone condition then it has the unique continuation property \cite[Theorem 5.7]{AIM16} which implies $u\equiv 0$ in $\Omega$.
\end{proof}

\section{Difference potentials and existence of solution}\label{sec:4}

For any function $\varphi : \partial R \to \bC$ we define difference single layer and double layer potentials as follows
\begin{equation}\label{eq:V}
V\varphi(x)=\sum_{y\in\partial R}\cG(x-y)\varphi(y), \quad \textup{for all}\  x\in \bZ^2,
\end{equation}
and
\begin{equation}\label{eq:W}
W\varphi(x)=\sum_{y\in\partial R}\big(\mathcal{T}\cG(x-y)+\delta_{x,y}\big)\varphi(y), \quad  \textup{for all}\  x\in \bZ^2,
\end{equation}
respectively. Since $\delta_{x,y}=0$ for every $x\in \Rint$ and $y\in \partial R$ then \eqref{eq:repre} can be written as
\[
u(x)=Wu(x)-V(\mathcal{T}u)(x), \quad x\in\mathring{\Omega}.
\]
The role of the summand $\delta_{x,y}$ is clarified by the following result.

\begin{lemma}\label{lem:VW}
For every $x \in \Rint$ we have
$$
(\Delta_d+k^2)V\varphi(x)=0, \quad \textup{and} \quad (\Delta_d+k^2)W\varphi(x)=0.
$$
\end{lemma}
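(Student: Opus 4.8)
The plan is to prove both identities by differentiating the defining sums \eqref{eq:V}, \eqref{eq:W} term by term in $x$, using nothing beyond the fact that $\cG$ is the fundamental solution, $(\Delta_d+k^2)_x\cG(x-y)=\delta_{x,y}$, and the disjointness $\Rint\cap\partial R=\varnothing$. For the single layer this is immediate: since $\Delta_d$ is a finite linear combination of shifts acting on the $x$-variable,
\[
(\Delta_d+k^2)V\varphi(x)=\sum_{y\in\partial R}\big[(\Delta_d+k^2)_x\cG(x-y)\big]\varphi(y)=\sum_{y\in\partial R}\delta_{x,y}\,\varphi(y),
\]
and every Kronecker symbol on the right vanishes once $x\in\Rint$, because then $x\notin\partial R$.

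For the double layer one must see why the extra summand $\delta_{x,y}$ is present. Split the boundary sum over the six sides and use, for $y\in(\partial R)_j$, the elementary identity $\mathcal{T}\cG(x-y)=\cG(x-y)-\cG\big(x-(y-e_j)\big)$, whence $(\Delta_d+k^2)_x\mathcal{T}\cG(x-y)=\delta_{x,y}-\delta_{x,y-e_j}$. Fixing $x\in\Rint$, the term $\delta_{x,y}$ drops; summing $-\delta_{x,y-e_j}$ over $j$ and over $y\in(\partial R)_j$ retains only the points $y=x+e_j\in\partial R$ (the side $j$ is then admissible automatically, since $x=y-e_j\in\Rint$), so that
\[
\sum_{y\in\partial R}\big[(\Delta_d+k^2)_x\mathcal{T}\cG(x-y)\big]\varphi(y)=-\sum_{v\in F^0_x\cap\partial R}\varphi(v),\qquad x\in\Rint .
\]
On the other hand, a neighbour $x+e_i$ of an interior point may itself lie on $\partial R$, which is exactly where the summand $\delta_{x,y}$ in \eqref{eq:W} becomes active; computing directly from $\Delta_d g(x)=\sum_{v\in F^0_x}g(v)-6g(x)$ one finds, for $x\in\Rint$,
\[
(\Delta_d+k^2)_x\sum_{y\in\partial R}\delta_{x,y}\,\varphi(y)=\sum_{v\in F^0_x\cap\partial R}\varphi(v).
\]
Adding the last two displays, the boundary contributions cancel and $(\Delta_d+k^2)W\varphi(x)=0$ throughout $\Rint$.

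I expect the only genuine obstacle to be precisely this bookkeeping. The potentials $V\varphi$ and $W\varphi$ are defined on all of $\bZ^2$; the Laplacian at $x\in\Rint$ reaches the six neighbours $x+e_i$, some of which may lie on $\partial R$, and the term $\delta_{x,y}$ is engineered so that its contribution there exactly cancels the ``spurious'' deltas $\delta_{x,y-e_j}$ produced by the shift inside $\mathcal{T}\cG$. One must also keep the assignment of each boundary point to a side $(\partial R)_j$ consistent (the convention already flagged after the definition of the sides), but no further idea is required.
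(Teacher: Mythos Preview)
Your proof is correct and follows essentially the same route as the paper's: both arguments reduce to applying $(\Delta_d+k^2)_x$ term by term using $(\Delta_d+k^2)\cG(x-y)=\delta_{x,y}$ and then checking that the Kronecker deltas produced cancel for $x\in\Rint$. The only difference is organizational: the paper treats each summand $\mathcal{T}\cG(x-y)+\delta_{x,y}$ individually (distinguishing the trivial case $F_x\cap\partial R=\varnothing$ from the case where a neighbour of $x$ lies on $\partial R$, and then checking one side explicitly), whereas you first sum the $\mathcal{T}\cG$ contributions to $-\sum_{v\in F^0_x\cap\partial R}\varphi(v)$ and the $\delta_{x,y}$ contributions to $+\sum_{v\in F^0_x\cap\partial R}\varphi(v)$, and observe the global cancellation. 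Your remark about keeping the side assignment consistent is exactly the bookkeeping the paper handles by its case-by-case check over the sides $(\partial R)_j$.
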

\begin{proof}
For the difference single layer potential we have
\[
(\Delta_d+k^2)V\varphi(x)=\sum_{y\in\partial R}[(\Delta_d+k^2)\cG(x-y)]\varphi(y)=\sum_{y\in\partial R}\delta_{x,y}\varphi(y)=0
\]
for all $x\notin \partial R$.
Similarly we can show the result for the discrete double layer potential when $x\in \Rint$ and $F_x\cap\partial R=\varnothing$. Indeed, we have $(\Delta_d+k^2)\mathcal{T}\cG(x-y)=0$, and $(\Delta_d+k^2)\delta_{x,y}=0$. Thus, it remains to consider the case $x\in \Rint$ and $F_x\cap\partial R\neq\varnothing$.
Let $y\in F_x\cap(\partial R)_1$. Then, $y_1+1=x_1$, $y_2=x_2$, and we have
\begin{align*}
(\Delta_d+k^2)(\mathcal{T}\cG(x_1-y_1&,x_2-y_2)+\delta_{x_1,y_1}\delta_{x_2,y_2})\\
&=(\Delta_d+k^2)(\cG(x_1,x_2;y_1,y_2)-\cG(x_1,x_2;y_1+1,y_2))\\
&\ +(\Delta_d+k^2)\delta_{x_1,y_1}\delta_{x_2,y_2}\\
&=\delta_{x_1,y_1}\delta_{x_2,y_2}-\delta_{x_1,y_1+1}\delta_{x_2,y_2}+ \delta_{x_1-1,y_1}\delta_{x_2,y_2}\\
&=0-1+1=0.
\end{align*}
Arguing analogously for the other sides of the boundary $\partial R$, we finally obtain
\[
(\Delta_d+k^2)W\varphi(x)=0, \quad x \in \Rint.
\]
\end{proof}
As a consequence of Lemma \ref{lem:VW}, we have
\[
V\varphi(x)=\sum_{y\in\partial \Omega}\cG(x-y)\varphi(y), \quad x\in \mathring{\Omega},
\]
and
\[
W\varphi(x)=\sum_{y\in\partial \Omega}\left(\mathcal{T}\cG(x-y)+\delta_{x,y}\right)\varphi(y), \quad x\in \mathring{\Omega},
\]
are radiating solutions to the equation \eqref{eq:H1} for any function $\varphi:\partial \Omega \to \bC$. From the proof of Lemma \ref{lem:VW}, it also follows that if $\Omega^c\neq\varnothing$ and $\partial\Omega=\partial\Omega^c$ then
\[
W'\varphi(x)=\sum_{y\in\partial \Omega^c}\mathcal{T}\cG(x-y)\varphi(y)
\]
is a radiating solution to the equation \eqref{eq:Helmholtz}.

Case I: $\Omega=\bZ^2$.  If $y_i$ is a point of intersection of several sides, we choose and fix only one side of the boundary in order to reduce number of numerical computations. Let $m$ be a number of points of $\partial \Omega$. Then, $\partial \Omega$ can be represented as a sequence of points $y_1, y_2, ... , y_m$ such that $y_i=y_j$ if and only if $i=j$ for all $1\le i,j\le m$.
Thus, in \eqref{eq:V} we will have only one summand connected with the boundary point $y_i$ and denote by  $\widetilde{V}$ the corresponding difference potential. Further, from the given function $f$ on $\partial\Omega$, we form a vector $F=(f_1,\dots,f_m)^\top$, $f_i:=f(y_i)$. Similarly, for an unknown function $\varphi$, we write $\Phi=(\varphi_1,\dots,\varphi_m)^\top$, $\varphi_i:=\varphi(y_i)$.

We look for a solution to the Problem $\mathcal{P}_{\mathrm{ext}}$ in the form
\begin{equation}\label{eq:sol1}
u(x)=\widetilde{V}\varphi(x)=\sum_{i=1}^m\cG(x-y_i)\varphi_i, \quad x\in \mathring{\Omega}.
\end{equation}
As in the proof of Lemma \ref{lem:VW}, it can be easily shown that $u$ is a radiating solution to the equation \eqref{eq:Helmholtz}, and it only need to satisfy the boundary conditions \eqref{eq:H3}.
Then, \eqref{eq:H3} implies the following linear system of boundary equations
\begin{equation}\label{eq:BS}
\cH\Phi=F,
\end{equation}
where
\[
\cH=\begin{pmatrix}
    \cG(y_1-y_1) & \cG(y_1-y_2) & \cG(y_1-y_3) & \dots & \cG(y_1-y_m) \\
    \cG(y_2-y_1) & \cG(y_2-y_2) & \cG(y_2-y_3) & \dots & \cG(y_2-y_m) \\
    \hdotsfor{5} \\
    \cG(y_m-y_1) & \cG(y_m-y_2) & \cG(y_m-y_3) & \dots & \cG(y_m-y_m)
\end{pmatrix}.
\]

\begin{lemma} The linear system of boundary equations \eqref{eq:BS} is uniquely solvable.
\end{lemma}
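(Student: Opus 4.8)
The plan is to prove uniqueness of the solution by showing the matrix $\cH$ is injective, which for a square matrix is equivalent to unique solvability. The standard approach for such boundary-integral-type systems is a contradiction argument based on the uniqueness theorem for the associated boundary value problem. Suppose $\cH\Phi=0$ for some $\Phi=(\varphi_1,\dots,\varphi_m)^\top$; I must show $\Phi=0$.

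First I would form the single layer potential $u(x)=\widetilde V\varphi(x)=\sum_{i=1}^m\cG(x-y_i)\varphi_i$ attached to this $\Phi$. By the computation in the proof of Lemma \ref{lem:VW}, $u$ is a radiating solution of the discrete Helmholtz equation \eqref{eq:Helmholtz} in $\mathring\Omega=\mathring{\bZ^2}$ (in Case I, $\Omega=\bZ^2$ so $\mathring\Omega$ is the interior part and $\partial\Omega$ the chosen boundary points). The hypothesis $\cH\Phi=0$ says exactly that $u(y_i)=\sum_j\cG(y_i-y_j)\varphi_j=0$ for every boundary point $y_i$, i.e. $u$ vanishes on $\partial\Omega$. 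Hence $u$ solves the homogeneous Problem $\mathcal P_{\mathrm{ext}}$, and by Theorem \ref{th:uni} we conclude $u\equiv 0$ on $\Omega$.

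Next I would propagate this to all of $\bZ^2$. Since $u=\widetilde V\varphi$ is defined on the whole lattice and $(\Delta_d+k^2)\widetilde V\varphi(x)=\sum_i\delta_{x,y_i}\varphi_i$, the function $u$ already satisfies the homogeneous discrete Helmholtz equation at every $x$ that is not one of the $y_i$. Because $u\equiv0$ on $\Omega$ and the complement consists only of the finitely many boundary points themselves (again in Case I), one evaluates $(\Delta_d+k^2)u$ at a neighbour $x$ of $y_i$ lying in $\mathring\Omega$: all terms of $\Delta_d u(x)$ and the $k^2u(x)$ term vanish except the single contribution $u(y_i)$ from the stencil, forcing $u(y_i)=0$ as well, so in fact $u\equiv0$ on $\bZ^2$. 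Then for each fixed $i$, $0=(\Delta_d+k^2)u(y_i)=\sum_j\delta_{y_i,y_j}\varphi_j=\varphi_i$ since the $y_j$ are distinct, giving $\Phi=0$. Thus $\cH$ has trivial kernel; being square, it is invertible and \eqref{eq:BS} is uniquely solvable.

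The main obstacle I anticipate is the careful bookkeeping around the boundary points in the discrete setting: because a single vertex may belong to several sides and the stencil $F_x$ of a point near $\partial\Omega$ may meet $\partial\Omega$, one must check that $u\equiv0$ genuinely propagates from $\mathring\Omega$ onto the chosen boundary points and that applying $(\Delta_d+k^2)$ at $y_i$ recovers precisely $\varphi_i$ (this uses that the $y_i$ are pairwise distinct by construction). Everything else reduces to invoking Theorem \ref{th:uni} and Lemma \ref{lem:VW}; once $u$ is known to be the zero solution, extracting $\Phi=0$ from the defining identity for $\widetilde V\varphi$ is immediate, and for a finite square system injectivity is equivalent to unique solvability.
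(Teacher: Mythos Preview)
Your approach is essentially identical to the paper's: reduce to injectivity of $\cH$, interpret a null-vector $\Phi$ as a radiating solution $u=\widetilde V\varphi$ of the homogeneous Problem $\cP_{\mathrm{ext}}$, apply Theorem \ref{th:uni} to obtain $u\equiv 0$, and then read off $\varphi_i=(\Delta_d+k^2)u(y_i)=0$. Your second paragraph is unnecessary and rests on a misreading of the geometry: in Case I one has $\Omega=\bZ^2$ with $\Omega^c=\varnothing$, so the boundary points $y_i$ already belong to $\Omega$ and ``$u\equiv 0$ on $\Omega$'' already means $u\equiv 0$ on the whole lattice---no propagation step or bookkeeping worry is needed.
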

\begin{proof} Due to the Rouch\'{e}-Capelli theorem it is sufficient to proof that the homogeneous system
\begin{equation}\label{eq:hom}
\cH\Phi=0
\end{equation}
has only the trivial solution. Let $\Phi^*=(\varphi^*_1,\dots,\varphi^*_m)^\top$ be a solution to \eqref{eq:hom}. Then,
\[
u(x)=\widetilde{V}\varphi^*(x)
\]
is a radiating solution to the homogeneous Problem $\mathcal{P}_{\mathrm{ext}}$. Therefore, due to Theorem \ref{th:uni}, we have $u\equiv 0$ in $\Omega$. Since $\Omega=\bZ^2$ then at any boundary point $y_i\in\partial\Omega$ we have
\[
0=(\Delta_d+k^2)u(y_i)=(\Delta_d+k^2)\widetilde{V}\varphi^*(y_i)=\sum_{i=1}^m\delta_{y_i,y}\varphi^*(y)=\varphi^*(y_i)=\varphi^*_i
\]
for all $1\le i\le m$.
\end{proof}

Case II: $\mathring{\Omega}\cup \partial \Omega \cup \mathring{\Omega}^c=\bZ^2$. By our assumption on $\partial\Omega^c$ (cf., Section \ref{sec:2}) we can represent it as a sequence of points $y_1, y_2, ... , y_m$ such that $y_i=y_j$ if and only if $i=j$ for all $1\le i,j\le m$.
Denote by $n_i$ the number of the sides of the boundary $\partial\Omega^c$ that $y_i$ belongs to. Then, using the same notation $F=(f_1,\dots,f_m)^\top$, $f_i:=f(y_i)$, and $\Phi=(\varphi_1,\dots,\varphi_m)^\top$, $\varphi_i:=\varphi(y_i)$, we look for a solution to the Problem $\mathcal{P}_{\mathrm{ext}}$ in the form
\begin{equation}\label{eq:sol2}
\begin{aligned}
u(x)&=W'\varphi(x)+\iota\eta V\varphi(x)\\
&=\sum_{j=1}^m \sum_{l=1}^{n_j}\big(\cG(x-y_j)-\cG(x-y^-_{jl})\big)\varphi_j+
\iota\eta\sum_{j=1}^m n_j\cG(x-y_j)\varphi_j\\
&=(1+\iota\eta)\sum_{j=1}^m n_j\cG(x-y_j)\varphi_j-\sum_{j=1}^m \sum_{l=1}^{n_j}\cG(x-y^-_{jl})\varphi_j ,
\end{aligned}
\end{equation}
where $\eta\neq 0$  is a real coupling parameter, and $y^-_{jl}$ , $l=1,..., n_j$, are interior points of $\Omega^c$ which we encounter in the expression $\mathcal{T}\cG(x-y)$ that is the discrete derivative in the outward normal direction with respect to $\Omega^c$. Due to Lemma \ref{lem:VW}, $u$ is a radiating solution to the equation \eqref{eq:Helmholtz}, and we only need to satisfy the boundary conditions \eqref{eq:H3}. From \eqref{eq:H3} we get the following linear system of boundary equations
\begin{equation}\label{eq:BS2}
(1+\iota\eta)\cH\mathcal{N}\Phi - \cK\Phi=F,
\end{equation}
where $\cH$ is defined as above, $\mathcal{N}=\mathrm{diag}(n_1,n_2,\dots, n_m)$ is diagonal matrix and
\[
\cK=\begin{pmatrix}
    \sum_{l=1}^{n_1}\cG(y_1-y^-_{1l}) & \sum_{l=1}^{n_2}\cG(y_1-y^-_{2l}) &  \dots & \sum_{l=1}^{n_m}\cG(y_1-y^-_{ml}) \\
    \sum_{l=1}^{n_1}\cG(y_2-y^-_{1l}) & \sum_{l=1}^{n_2}\cG(y_2-y^-_{2l}) &  \dots & \sum_{l=1}^{n_m}\cG(y_2-y^-_{ml}) \\
    \hdotsfor{4} \\
    \sum_{l=1}^{n_1}\cG(y_m-y^-_{1l}) & \sum_{l=1}^{n_2}\cG(y_m-y^-_{2l}) &  \dots & \sum_{l=1}^{n_m}\cG(y_m-y^-_{ml})
\end{pmatrix}.
\]

\begin{lemma} The linear system of boundary equations \eqref{eq:BS2} is uniquely solvable.
\end{lemma}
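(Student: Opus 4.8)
The plan is to proceed exactly as in the previous lemma. By the Rouch\'{e}--Capelli theorem it suffices to show that the homogeneous system $(1+\iota\eta)\cH\mathcal{N}\Phi-\cK\Phi=0$ has only the trivial solution. So I would start from an arbitrary solution $\Phi^*=(\varphi^*_1,\dots,\varphi^*_m)^\top$, set $\varphi^*(y_i):=\varphi^*_i$ on $\partial\Omega^c=\partial\Omega$, and form
\[
u(x):=W'\varphi^*(x)+\iota\eta\,V\varphi^*(x),\qquad x\in\bZ^2,
\]
the function written out in \eqref{eq:sol2}. By Lemma \ref{lem:VW} this $u$ is a radiating solution of \eqref{eq:Helmholtz} in $\mathring{\Omega}$, and the homogeneous system is precisely the statement that $u$ vanishes on $\partial\Omega$; hence $u$ solves the homogeneous Problem $\mathcal{P}_{\mathrm{ext}}$ and Theorem \ref{th:uni} gives $u\equiv 0$ on $\Omega=\mathring{\Omega}\cup\partial\Omega$. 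It then remains to carry this information into the finite set $\Omega^c$ and squeeze out $\Phi^*=0$.

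The heart of the argument is a double evaluation of $(\Delta_d+k^2)u$ at a boundary point $y_i\in\partial\Omega^c$. Applying $(\Delta_d+k^2)$ to the explicit expression \eqref{eq:sol2} and using \eqref{eq:greens}, and recalling that every auxiliary point $y^-_{jl}$ lies in $\mathring{\Omega}^c$ while $y_i\in\partial\Omega^c$, I get $(\Delta_d+k^2)u(y_i)=(1+\iota\eta)\,n_i\,\varphi^*_i$; the same computation at an interior point $x\in\mathring{\Omega}^c$ shows $(\Delta_d+k^2)u(x)=g(x)$ with $g$ supported on the points $y^-_{jl}$ and
\[
\sum_{x\in\mathring{\Omega}^c}u(x)\,\overline{g(x)}=-\sum_{j=1}^m\overline{\varphi^*_j}\sum_{l=1}^{n_j}u(y^-_{jl}).
\]
On the other hand, since $u(y_i)=0$ and $u$ also vanishes at every neighbour of $y_i$ that lies in $\Omega$, the surviving terms of $\Delta_d u(y_i)$ come only from the $n_i$ neighbours of $y_i$ belonging to $\mathring{\Omega}^c$, and these are exactly the points $y^-_{il}$, $l=1,\dots,n_i$; therefore $(\Delta_d+k^2)u(y_i)=\sum_{l=1}^{n_i}u(y^-_{il})$. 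Comparing the two expressions yields $\sum_{l=1}^{n_i}u(y^-_{il})=(1+\iota\eta)\,n_i\,\varphi^*_i$ for every $i$, and feeding this back into the displayed formula gives $\sum_{x\in\mathring{\Omega}^c}u(x)\overline{g(x)}=-(1+\iota\eta)\sum_{j=1}^m n_j|\varphi^*_j|^2$.

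To finish I would invoke the discrete Green's second identity \eqref{eq:GRsecond} on the finite region $\Omega^c$ with the pair $(u,\overline{u})$. Since $u\equiv 0$ on $\partial\Omega^c$, the right-hand side of \eqref{eq:GRsecond} vanishes; since $\Delta_d u=-k^2u+g$ in $\mathring{\Omega}^c$ and $k$ is real, the left-hand side reduces to $\sum_{x\in\mathring{\Omega}^c}\bigl(u(x)\overline{g(x)}-\overline{u(x)}\,g(x)\bigr)$, so $\mathrm{\Im m}\sum_{x\in\mathring{\Omega}^c}u(x)\overline{g(x)}=0$. Together with the identity just obtained this forces $\eta\sum_{j=1}^m n_j|\varphi^*_j|^2=0$, and since $\eta\neq 0$ and every $n_j\ge 1$ we conclude $\varphi^*_j=0$ for all $j$, i.e. $\Phi^*=0$. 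Unique solvability of \eqref{eq:BS2} then follows from Rouch\'{e}--Capelli.

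The step I expect to be the main obstacle is the combinatorial bookkeeping in the second paragraph: a point of $\partial\Omega^c$ may belong to several sides, and the auxiliary points $y^-_{jl}$ attached to distinct boundary points may coincide, so one has to track the multiplicities $n_i$ with care --- this is exactly what makes the diagonal matrix $\mathcal{N}$ appear and what makes the two evaluations of $(\Delta_d+k^2)u(y_i)$ match. Everything else is already at hand: the ``jump'' contribution of the $\delta_{x,y}$ terms and the radiating nature of the potentials are packaged in Lemma \ref{lem:VW} and the remarks after it, uniqueness for the exterior problem is Theorem \ref{th:uni}, and the role of the coupling $\eta\neq 0$ is simply to make the imaginary part of the energy identity sign-definite, as in the classical combined-field (Brakhage--Werner) argument.
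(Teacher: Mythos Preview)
Your proposal is correct and follows essentially the same line as the paper: reduce to the homogeneous system via Rouch\'{e}--Capelli, form the combined potential, apply Theorem \ref{th:uni} to force $u\equiv 0$ on $\Omega$, evaluate $(\Delta_d+k^2)u(y_i)$ in two ways to obtain $\sum_l u(y^-_{il})=(1+\iota\eta)n_i\varphi^*_i$, and close with an interior energy identity whose imaginary part is sign-definite because $\eta\neq 0$. The paper organizes the last step slightly differently --- it introduces the shifted function $u_-=W\varphi^*+\iota\eta V\varphi^*$, which satisfies the homogeneous Helmholtz equation in $\mathring{\Omega}^c$ with boundary data $n_i\varphi^*_i$, and then applies Green's \emph{first} identity (obtaining the weight $n_i^2$ instead of your $n_i$) --- whereas you keep the single function $u$ with its source term $g$ and apply Green's \emph{second} identity; the two computations are equivalent.
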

\begin{proof}
As above it is sufficient to proof that the homogeneous system
\begin{equation}\label{eq:hom2}
(1+\iota\eta)\cH\mathcal{N}\Phi - \cK\Phi=0
\end{equation}
has only the trivial solution. Let $\Phi^*=(\varphi^*_1,\dots,\varphi^*_m)^\top$ be a solution to \eqref{eq:hom2}. Then, a function
\[
u_+(x)=W'\varphi^*(x)+\iota\eta V\varphi^*(x)
\]
is a radiating solution to the homogeneous Problem $\mathcal{P}_{\mathrm{ext}}$. Therefore, due to Theorem \ref{th:uni}, we have $u^+\equiv 0$ in $\Omega$. In particular, $u^+(y_i)=0$ for all $i=1,\dots,m$.
Further, a function
\[
u_-(x)=W\varphi^*(x)+\iota\eta V\varphi^*(x)
\]
satisfies the discrete Helmholtz equation in $\Omega^c$ (cf., Lemma \ref{lem:VW}).
Notice that for any $y_i\in \partial\Omega^c=\partial\Omega$, we have
\[
u_-(y_i)=u_+(y_i)+\sum_{j=1}^m \sum_{l=1}^{n_i}\delta_{y_j,y_i}\varphi_j^*=u_+(y_i)+n_i\varphi_i^*=n_i\varphi_i^*.
\]
Since $(\Delta_d+k^2)u_+(y_i)=(1+\iota\eta)n_i\varphi_i^*$ and $u_+\equiv 0$ in $\Omega$ we get
\[
\sum_{l=1}^{n_i}\mathcal{T}u_+(y_i)=0-(1+\iota\eta)n_i\varphi_i^*=-(1+\iota\eta)n_i\varphi_i^*.
\]
Noting that $u_+(x)=u_-(x)$ in $\mathring{\Omega}^c$, we obtain
\[
\sum_{l=1}^{n_i}\mathcal{T}u^-(y_i)=n_i\varphi_i^*-(1+\iota\eta)n_i\varphi_i^*=-\iota\eta n_i\varphi_i^*.
\]
Hence, using \eqref{eq:GRfirst} for $u_-$ and its complex conjugation $\bar{u}_-$, we get
\[
\begin{aligned}
\sum_{x\in \mathring{\Omega}^c}\big(|\nabla^+_d u_-(x)|^2&+ |\nabla^-_d u_-(x)|^2 - k^2|u_-(x)|^2\big)\\
&=\sum_{y\in \partial \Omega^c} \bar{u}_-(y)\mathcal{T}u_-(y)=-\iota\eta\sum_{i=1}^m n^2_i|\varphi_i^*|^2.
\end{aligned}
\]
Taking the imaginary part of the last equation, we obtain that $\varphi_i^*=0$ for all $i=1,\dots, m$, so $\Phi^*=0$.
\end{proof}

Due to a direct combination of the results obtained above now we have the main conclusions of the present work.

\begin{theorem}\label{theo:main} The Problem $\mathcal{P}_{\mathrm{ext}}$ has a unique radiating solution which is represented as \eqref{eq:sol1} if $\Omega=\bZ^2$ or as \eqref{eq:sol2} if $\mathring{\Omega}\cup\partial\Omega\cup \mathring{\Omega}^c=\bZ^2$, where $\Phi$ is a unique solution to the system of linear equations \eqref{eq:BS} or \eqref{eq:BS2}, respectively.
\end{theorem}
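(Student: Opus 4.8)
The plan is to assemble the ingredients already in place. Uniqueness is immediate from Theorem~\ref{th:uni}, so the only task is to exhibit a radiating solution of the prescribed form and to confirm that the associated boundary system determines it. I would split the argument into the two cases of the statement, exactly as organized in Section~\ref{sec:4}.

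For Case~I ($\Omega=\bZ^2$), I would let $\Phi=(\varphi_1,\dots,\varphi_m)^\top$ be the unique solution of the linear system \eqref{eq:BS} (whose unique solvability has just been proved), and set $u:=\widetilde V\varphi$ as in \eqref{eq:sol1}. Then I would argue in two steps: first, by Lemma~\ref{lem:VW} together with the remark that single-layer potentials satisfy the radiation condition, $u$ is a radiating solution of \eqref{eq:Helmholtz} in $\mathring{\Omega}$ for every choice of $\varphi$; second, evaluating \eqref{eq:sol1} at the boundary vertices $y_1,\dots,y_m$ turns the Dirichlet condition \eqref{eq:H3} into precisely the system \eqref{eq:BS}, which holds by the choice of $\Phi$. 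Hence $u$ solves Problem $\mathcal{P}_{\mathrm{ext}}$; by Theorem~\ref{th:uni} it is the unique radiating solution, and since $\Phi$ is uniquely determined the representation \eqref{eq:sol1} is unambiguous.

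For Case~II ($\mathring{\Omega}\cup\partial\Omega\cup\mathring{\Omega}^c=\bZ^2$), I would repeat the same argument with the combined ansatz \eqref{eq:sol2}, $u:=W'\varphi+\iota\eta V\varphi$ with $\eta\neq 0$ the fixed real coupling parameter, using that $W'\varphi$ and $V\varphi$ are both radiating solutions in $\mathring{\Omega}$ (Lemma~\ref{lem:VW} and the subsequent remarks) and that reading off \eqref{eq:sol2} at $y_1,\dots,y_m$ reproduces the left-hand side of \eqref{eq:BS2}. The unique solvability of \eqref{eq:BS2} then supplies $\Phi$, the resulting $u$ solves $\mathcal{P}_{\mathrm{ext}}$, and Theorem~\ref{th:uni} yields uniqueness of this solution.

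I do not anticipate a serious obstacle, since the substantive work is already done: the radiation condition together with the Rellich-type argument behind Theorem~\ref{th:uni}, the mapping properties of the difference potentials in Lemma~\ref{lem:VW}, and above all the invertibility of the boundary matrices $\cH$ and $(1+\iota\eta)\cH\mathcal{N}-\cK$. The one point I would be careful about is keeping the coupling parameter $\eta\neq 0$ fixed throughout Case~II, so that the potential $u$ and the matrix appearing in \eqref{eq:BS2} refer to the same operator. With that, the theorem follows by directly combining the preceding results.
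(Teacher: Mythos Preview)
Your proposal is correct and matches the paper's approach exactly: the paper states Theorem~\ref{theo:main} as an immediate consequence of the preceding results (``a direct combination of the results obtained above''), with no separate proof given. Your two-case breakdown, invoking Theorem~\ref{th:uni} for uniqueness and the unique solvability lemmas for the boundary systems \eqref{eq:BS} and \eqref{eq:BS2} together with Lemma~\ref{lem:VW} for existence, is precisely the intended argument.
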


\section{Numerical results}\label{sec:5}

The main difficulty for numerical evaluation of solutions to \eqref{eq:BS} and \eqref{eq:BS2} is to compute the lattice Green's function. For this purpose we apply  the method developed in \cite{BC}. Using 8-fold symmetry, we need only to compute the lattice Green's function $\cG(i,j)$ with $i\ge j \ge 0$. Following to \cite{BC}, let us introduce the vectors
$\cV_{2p}=(\cG(2p,0),\cG(2p-1,1),\dots,\cG(p,p))^\top$ and $\cV_{2p+1}=(\cG(2p+1,0),\cG(2p,1),\dots,\cG(p+1,p))^\top$ that collect all distinct Green’s functions $\cG(i,j)$ with ``Manhattan distances" $\mid i \mid + \mid j \mid$ of $2p$ and $2p+1$, respectively. For any Manhattan distance larger than 1, equation
\begin{equation}
(\Delta_d+k^2)\cG(x)=\delta_{x,0}
\end{equation}
can be written in the matrix form $\gamma_n(k)\cV_n=\alpha_n(k)\cV_{n-1}+\beta_n(k)\cV_{n+1}$
where $\alpha_n(k)$, $\beta_n(k)$ and $\gamma_n(k)$ are sparse matrices (cf., Appendix A). Notice that only the
dimensions of these matrices depend on $n$. It is shown in \cite{BC} that, for any $n\ge 1$, we have
\begin{equation}
\cV_n=A_n(k)\cV_{n-1},
\end{equation}
where the matrices $A_n(k)$ are defined by the following recurrence formula
\begin{equation}
A_n(k)=[\gamma_n(k)-\beta_n(k)A_{n+1}]^{-1}\alpha_n(k).
\end{equation}
They can be computed starting from a sufficiently large $N$ with $A_{N+1}(k)=0$. Here, it is worth mentioning that for $k=2$ we need to choose a better ``initial guess" than $A_{N+1}(k)=0$, since in this case $\mathrm{det}\gamma_n(k)=0$, and the matrix $\gamma_n(k)-\beta_n(k)A_{n+1}$ is not invertible.

Once $A_n(k)$ are known, we have $\cV_n=A_n(k)\dots A_1(k)\cV_{0}$, where $\cV_{0}=\cG(0,0)$. In particular, $\cV_1=\cG(1,0)=A_1(k)\cG(0,0)$ which, together with $6\cG(1,0)-(6-k^2)\cG(0,0)=1$, gives $\cG(0,0)=1/[6A_1(k)-6+k^2]$. This completes the calculation of the Green's function using elementary operations and no integrals. Notice also one more important advantage of this method. The $A_n(k)$ matrices are calculated coming down from asymptotically large Manhattan distances.  As they are propagated towards smaller Manhattan distances, it definitely gives us the physical solution.

Finally, we demonstrate our theoretical and numerical approaches on the following Problem $\mathcal{P}_{\mathrm{ext}}$:
let $\mathring{\Omega}^c$ and $\partial \Omega^c$ be the sets of the following points $(2,2)$, $(3,2)$, $(3,3)$ and
$(2,1)$, $(3,1)$, $(4,1)$, $(4,2)$, $(4,3)$, $(3,4)$, $(2,4)$, $(2,3)$, $(1,3)$, $(1,2)$, respectively. Further, we set $\Omega=\bZ^2\backslash \mathring{\Omega}^c$ with $\partial \Omega= \partial \Omega^c$, cf. Figure \ref{Fig2}. Clearly $\Omega$ satisfies the cone condition, and we have a decomposition $\mathring{\Omega}\cup \partial \Omega \cup \mathring{\Omega}^c=\bZ^2$. For this example, we take $f(y)\equiv 1$ on $\partial \Omega$ and $k=\sqrt{2}$. Due to Theorem \ref{theo:main}, the problem is uniquely solvable, and the solution can be found as
\[
u(x)=W'\varphi(x)+\iota V\varphi(x).
\]
Here, we take $\eta=1$. One can try to minimize the condition number of corresponding matrices by the proper choice of $\eta$, but it is not our goal at the moment. The vector $\Phi=(\varphi_1,\dots,\varphi_{10})^\top$ is a unique solution to equation \eqref{eq:BS2}.
In order to solve obtained system of linear equations and then find the solution $u$, we have developed MATLAB code that uses the efficient method described above to compute Green's functions. As a technical aside, these data were obtained in several minutes on a regular desktop. Some results of numerical evaluations are plotted in Figure \ref{Fig3}. Some key features of numerical solutions can be immediately observed. Namely, due to the small hole, we notice some kind of symmetry of $\mathrm{\Re e}\,u$ and $|u|$ in the macro level, however, as the plot on Figure \ref{Fig3} (e) shows, we do not have an exact symmetry at the micro level. Besides, we also see some interference effects on the density plots of  $\mathrm{\Re e}\,u$ and $|u|$.

\begin{figure}[t!]  
\begin{center}
\subfigure[The density plot of $\mathrm{Re}\,u$.]{\includegraphics[scale=0.45]{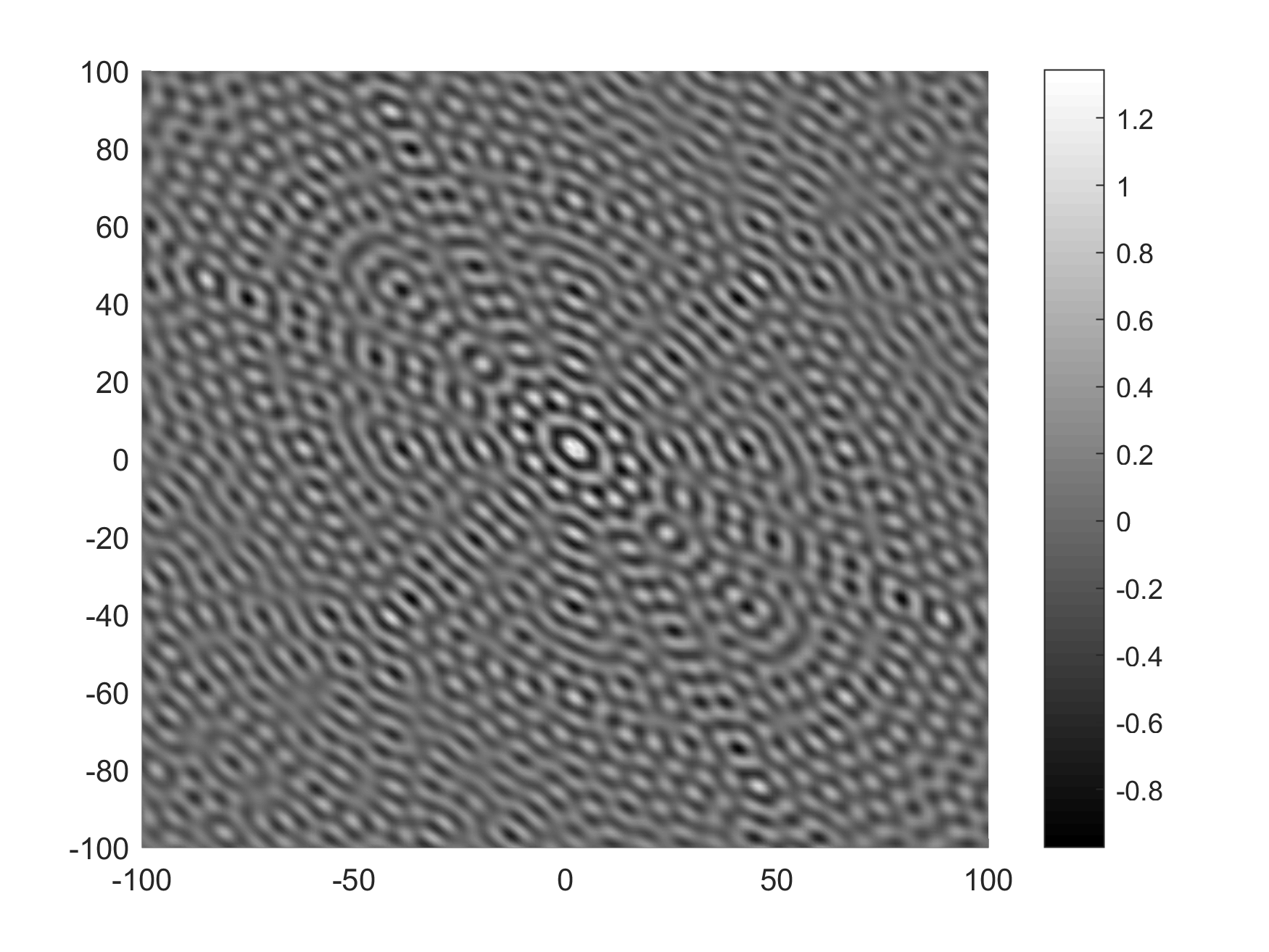}}
\subfigure[The density plot of $|u|$ in $\bZ^2$.]{\includegraphics[scale=0.45]{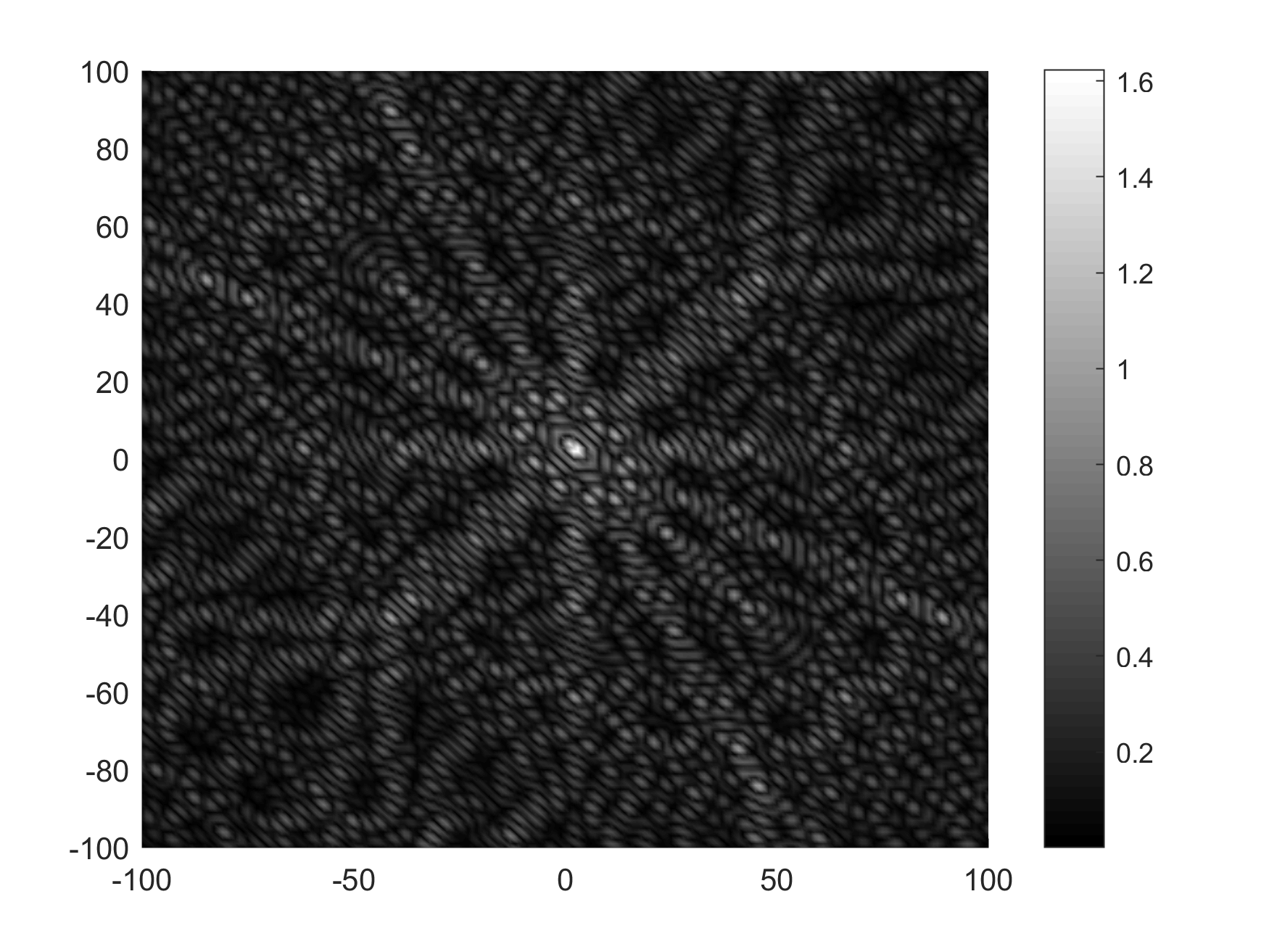}}
\end{center}
\begin{center}
\subfigure[The density plot of $\mathrm{Re}\,u$.]{\includegraphics[scale=0.45]{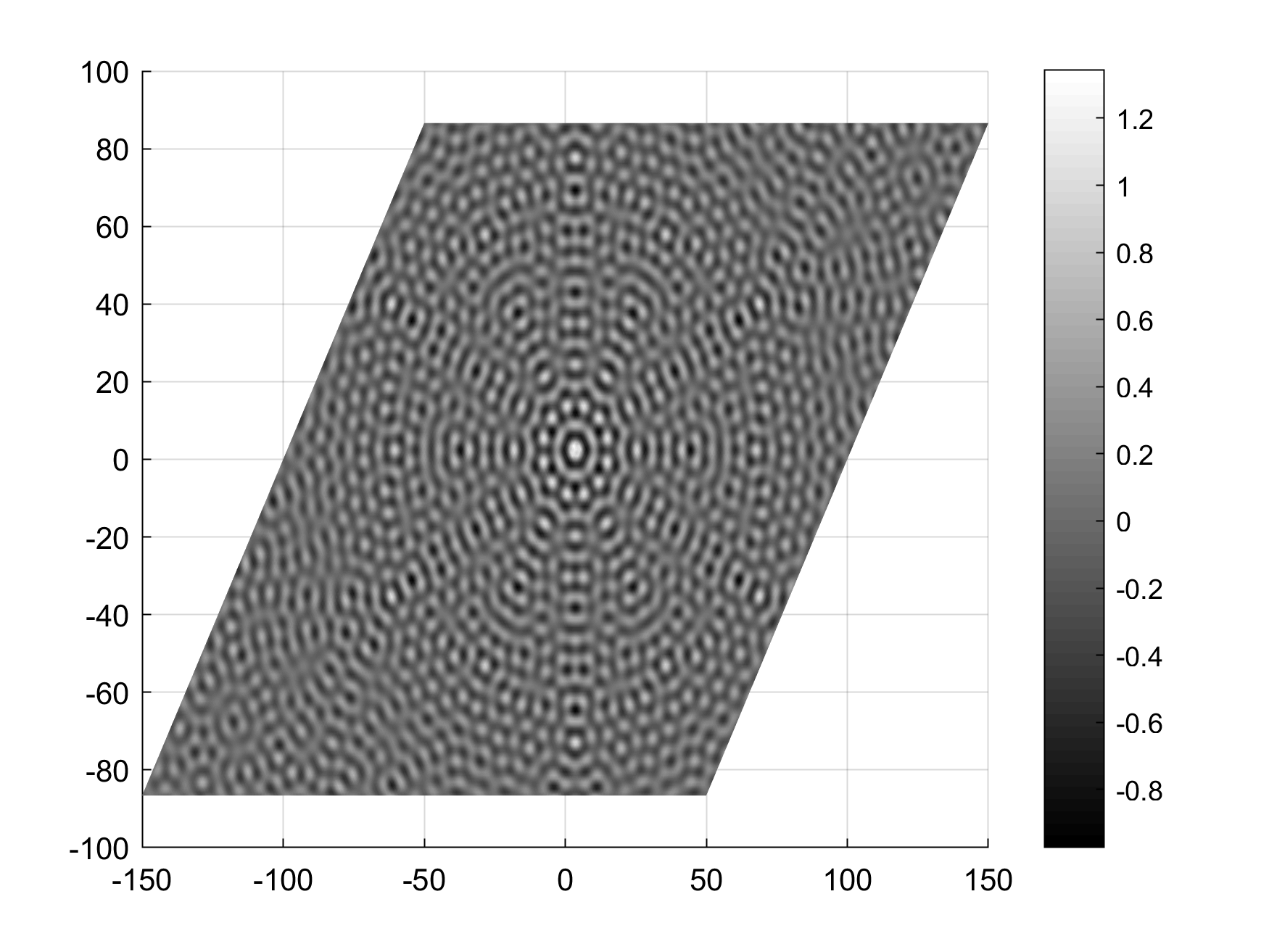}}
\subfigure[The density plot of $|u|$.]{\includegraphics[scale=0.45]{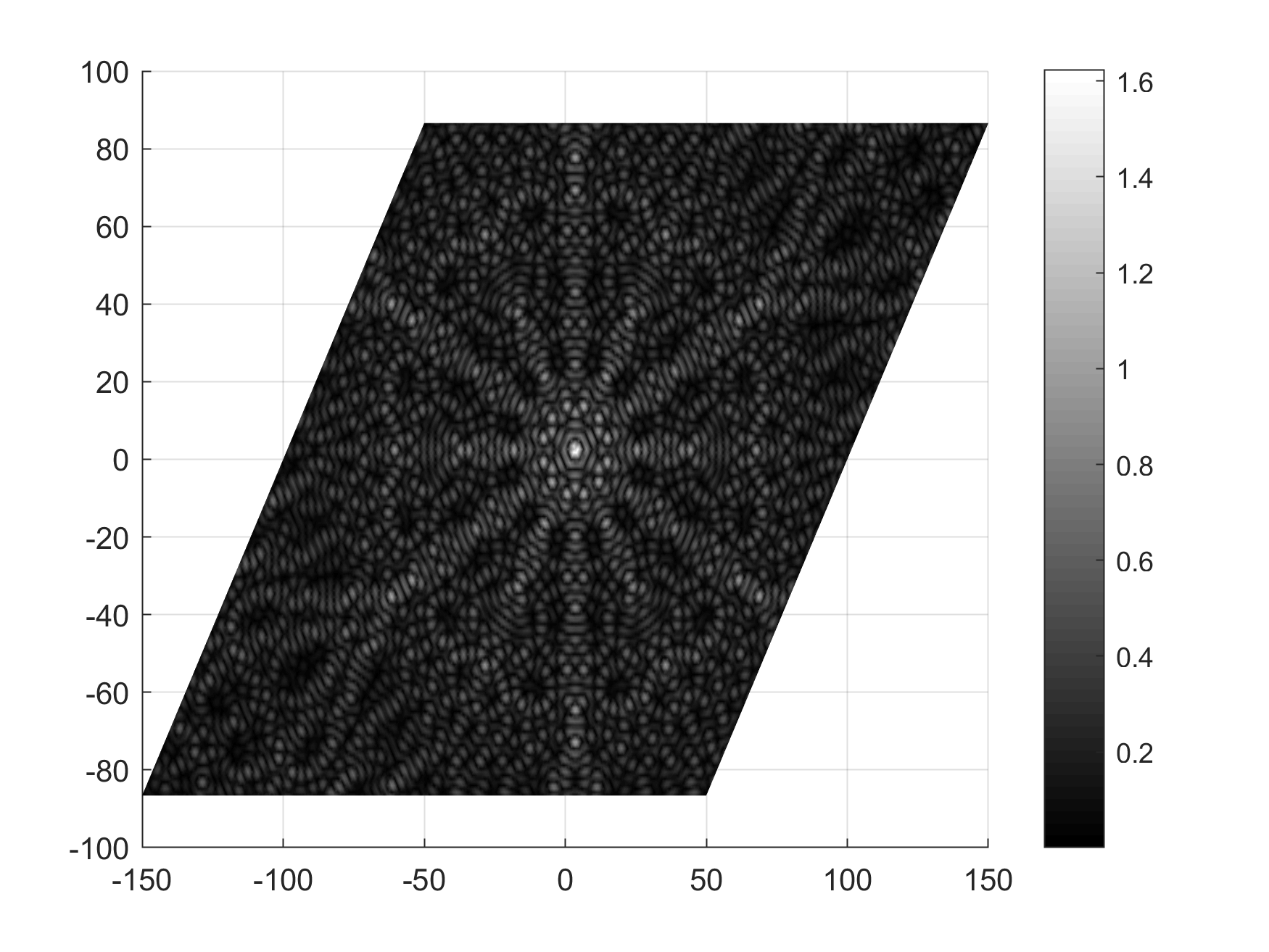}}
\end{center}
\begin{center}
\subfigure[The graph of $\mathrm{Re}\,u(\cdot,2)$ .]{\includegraphics[scale=0.4]{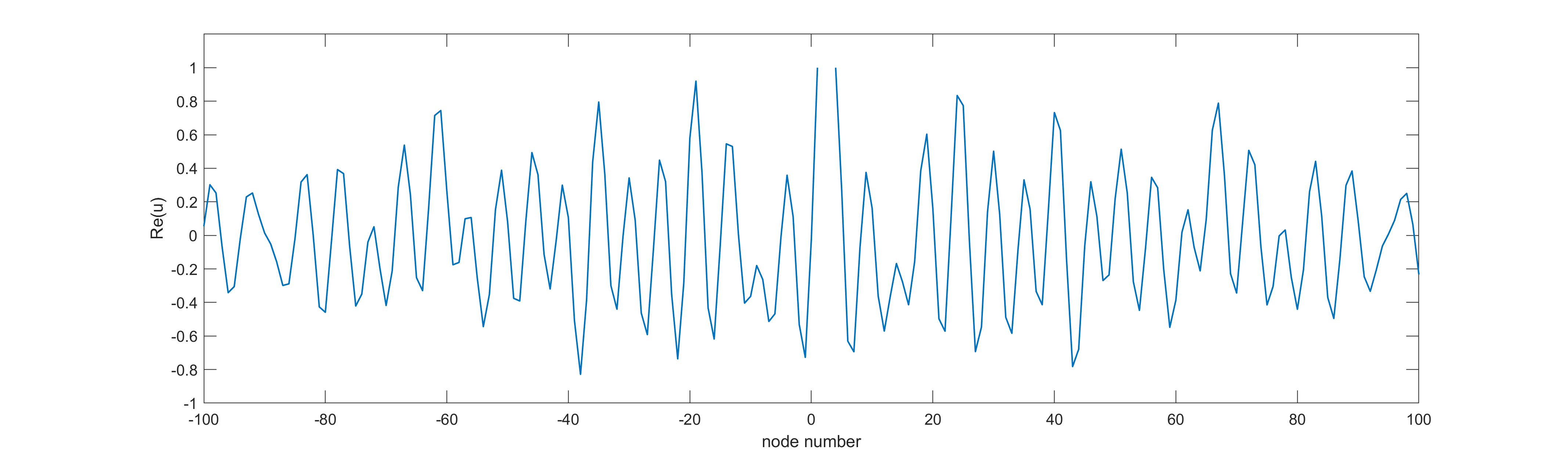}}
\end{center}
\caption{Plots in (a), (b), and (e) are represented in $\bZ^2$, while (c) and (d) are represented in original coordinates of the triangular lattice $\mathfrak{T}$. $k=\sqrt{2}$.}
\label{Fig3}
\end{figure}

\section{Discussion}

In this paper, we have constructed the discrete scattering theory for the two-dimensional Helmholtz equation with the real wave number $k\in (0,2\sqrt{2})$ for the triangular lattice. Similarly to the continuum theory, we used the notion of radiating solution for the continuous Helmholtz equation and solved the exterior Dirichlet problem without passing to the complex wave number. Here, it is worth mentioning that due to the more complex form of the radiation condition for $k\in (2\sqrt{2}, 3)$ than \eqref{eq:radcond} we have some difficulties to prove the uniqueness result for this case using the presented approach. Clearly, we can introduce a small dumping parameter, pass it to the complex wave number, and then use the limiting absorption principle to get the desired solution, but this is not our goal and, therefore, we have restricted ourselves only to the case $k\in (0,2\sqrt{2})$.

Finally, depending on the objectives of the investigation one can consider different spaces on lattices, but for our purposes it is sufficient to take the space $\ell^\infty_R(\Omega)$, which is a Banach space of all bounded sequences on $\Omega\subset \bZ^2$ that satisfy the radiation condition \eqref{eq:radcond}.

\section{Appendix A : Sparse matrices}\label{secA1}

The sparse matrices $\alpha_n(k)$, $\beta_n(k)$ and $\gamma_n(k)$ are defined as follows:
if $n=2p$  then  $\alpha_{2p}(k)$ is a $(p+1)\times p$ matrix such that
$\alpha_{2p}(k)\mid_{i,i}=1$, $i=\overline{1,p}$,
$\alpha_{2p}(k)\mid_{i,i-1}=1$, $i=\overline{2,p}$,
while $\alpha_{2p}(k)\mid_{p+1,p}=2$,
and all other matrix elements are zero. The $\beta_{2p}(k)$ is a $(p+1)\times (p+1)$ matrix
such that $\beta_{2p}(k)\mid_{i,i}=1$, $i=\overline{1,p}$,
$\beta_{2p}(k)\mid_{i,i+1}=1$, $i=\overline{2,p}$,
while $\beta_{2p}(k)\mid_{p+1,p+1}=\beta_{2p}(k)\mid_{1,2}=2$, and all other matrix elements are zero.
The $\gamma_{2p}(k)$ is a $(p+1)\times (p+1)$ matrix such that $\gamma_{2p}(k)\mid_{i,i}=6-k^2$, $i=\overline{1,p+1}$,
$\gamma_{2p}(k)\mid_{i,i+1}=\gamma_{2p}(k)\mid_{i,i-1}=-1$, $i=\overline{2,p}$, and $\gamma_{2p}(k)\mid_{1,2}=\gamma_{2p}(k)\mid_{p+1,p}=-2$.

If $n=2p+1$ then $\alpha_{2p+1}(k)$ is a $(p+1)\times (p+1)$ matrix such that
$\alpha_{2p+1}(k)\mid_{i,i}=1$, $i=\overline{1,p+1}$,
$\alpha_{2p+1}(k)\mid_{i,i-1}=1$, $i=\overline{2,p+1}$,
and all other matrix elements are zero. The $\beta_{2p+1}(k)$ is a
$(p+1)\times (p+2)$ matrix such that $\beta_{2p+1}(k)\mid_{i,i}=1$, $i=\overline{1,p+1}$,
$\beta_{2p+1}(k)\mid_{i,i+1}=1$, $i=\overline{2,p+1}$,
while $\beta_{2p+1}(k)\mid_{1,2}=2$, and all other matrix elements are zero.
The $\gamma_{2p+1}(k)$ is a $(p+1)\times (p+1)$ matrix such that $\gamma_{2p+1}(k)\mid_{i,i}=6-k^2$, $i=\overline{1,p}$,
$\gamma_{2p+1}(k)\mid_{p+1,p+1}=5-k^2$, while $\gamma_{2p+1}(k)\mid_{i,i+1}=-1$, $i=\overline{2,p}$, $\gamma_{2p+1}(k)\mid_{i,i-1}=-1$, $i=\overline{2,p+1}$, and $\gamma_{2p+1}(k)\mid_{1,2}=-2$. Finally, $\gamma_1(k)$ is a $1\times 1$ matrixs with an element $4-k^2$.

\section*{Acknowledgments}

This work was supported by Shota Rustaveli National Science Foundation of Georgia (SRNSFG) [FR-21-301]



\noindent

\end{document}